\newcommand{\tn}{\interleave}
\newcommand{\mcT}{\mathcal{T}}
\newcommand{\mcF}{\mathcal{F}}
\newcommand{\mcK}{\mathcal{K}}
\newcommand{\mcN}{\mathcal{N}}
\newcommand{\llangle}{\langle\hspace{-2.5pt}\langle}
\newcommand{\rrangle}{\rangle\hspace{-2.5pt}\rangle}
\newcommand{\IR}{\mathbb{R}}
\numberwithin{equation}{section}
\newtheorem{lemma}{Lemma}[section]
\newtheorem{theorem}{Theorem}[section]
\newtheorem{remark}{Remark}[section]
\newtheorem{corollary}{Corollary}[section]
\newenvironment{proof}{\noindent \newline {\bf Proof.}}
{\hfill \mbox{\fbox{} } \newline}
\date{}
\begin{document}

\title{A Cut Finite Element Method for a Model of Pressure in Fractured Media }

% defining authors
\author[$\star$]{Erik Burman}
\author[$\dagger$]{Peter Hansbo}
\author[$\ddagger$]{Mats G. Larson}
\affil[$\star$]{\footnotesize\it  Department of Mathematics, University College London, London, UK--WC1E  6BT, United Kingdom}
\affil[$\dagger$]{\footnotesize\it  Department of Mechanical Engineering, J\"onk\"oping University, SE-55111 J\"onk\"oping, Sweden}
\affil[$\ddagger$]{\footnotesize\it Department of Mathematics and Mathematical Statistics, Ume{\aa}~University, SE-901\,87 Ume{\aa}, Sweden}

\maketitle

\begin{abstract}
We develop a robust cut finite element method for a model of diffusion
in fractured media consisting of a bulk domain with embedded
cracks. The crack has its own pressure field and can cut through the
bulk mesh in a very general fashion. Starting from a common 
background bulk mesh, that covers the domain, finite element spaces 
are constructed for the interface and bulk subdomains leading to 
efficient computations of the coupling terms. The crack pressure field also uses the
bulk mesh for its representation.
The interface conditions are a generalized form of conditions of Robin
type previously considered in the literature which allows the modeling
of a range of flow regimes across the fracture. The method is robust in
the following way: 1. Stability of the formulation in the full range
of parameter choices; and 2. Not sensitive to the location of the
interface in the background mesh. We derive an optimal order a priori
error estimate and present illustrating numerical examples.
\end{abstract}

\section{Introduction} 
The numerical modelling of flow in fractured porous media is important
both in environmental science and in industrial applications. It is
therefore not surprising that it is currently receiving increasing attention
from the scientific computing community. Here we are interested in
models where the fractures are modelled as embedded surfaces of
dimension $d-1$ in a $d$ dimensional bulk domain. Models on this type
of geometries of mixed dimension are typically obtained by averaging
the flow equations across the width of the fracture and introducing
suitable coupling conditions for the modelling of the interaction with
the bulk flow. Such reduced
modelled have been derived for instance in
\cite{HS74,MaJaRo05,ABH09}. The coupling conditions in these models
typically take the form of a Robin type condition. The physical
properties of the coupling enters as parameters in this interface
condition. The size of these parameters can vary with several orders
of magnitude depending on the physical properties of the crack and of the material in
the porous matrix. This makes it challening to derive methods that
both are flexible with respect to mesh geometries and robust with
respect to coupling conditions. A wide variety of different strategies
for the discretisation of fractured porous media flow
has been proposed in the literature. One approach is to use a method
that allows for nonconforming coupling between the bulk mesh and the
fracture mesh \cite{BNY18}, or even arbitrary polyhedral elements in the bulk mesh in order to be able to mesh the
fractures easily. This latter approach has been developed using
discontinuous Galerkin methods \cite{AntFac19}, virtual
element methods \cite{FK18} and high order hybridised methods \cite{CDF18}.

Herein we will consider an unfitted approach, drawing on previous
work \cite{BCHL15, CLEB16,BuHaLa19b} where flow in fractured porous media was modelled in the
situation where the pressure is a globally continuous function. When
using unfitted finite element methods, the bulk mesh can
be created completely independently of the fractures. Instead the
finite element space is modified locally to allow for discontinuities
across fractures and interface conditions are typically imposed
weakly, or using methods similar to Nitsche's method. For other
recent work using unfitted methods we refer to \cite{KMR19}, where a
stabilized Lagrange multiplier method is considered for the interface
coupling and \cite{CO19} where a mixed method is considered for the
Darcy's equations both in the bulk and on the surface.

%We develop a cut finite element approximation for an elliptic
%interface problem modeling the pressure in a medium with a crack. 
The upshot here, compared to \cite{BuHaLa19b} is that the
pressure in the crack has its own pressure, allowing for the accurate
approximation of problems where the pressure is discontinuous between
the bulk and the fracture, and that the interface conditions are
imposed in a way allowing for the full range of parameter values in
the Robin condition, without loss of stability. We use the variant of the
interface modelling
considered in \cite{MaJaRo05}, that was also recently applied for the
numerical modelling in \cite{AntFac19}. In these models we may obtain 
a wide range of parameter values in the interface condition and we
therefore develop a method which handle the full range of values and
produces approximations with optimal order convergence. The approach is inspired by the work of Stenberg \cite{JuSt09} and may be viewed as a version of the Nitsche method that 
can handle Robin type conditions and which converges to the standard Nitsche 
method when the Robin parameter tends to infinity. Previous applications of this 
approach in the context of fitted finite elements include \cite{HaHa04} and 
\cite{Be16}.

The finite element spaces are constructed starting from a standard mesh 
equipped with a finite element space. For each geometric domain (subdomains 
and  interface) we mark all the elements intersected by the domain and then we restrict the finite element space to that set to form a finite element space for each domain. This procedure leads to cut finite elements and we use stabilization to ensure that the resulting form associated with the method is coercive and that 
the stiffness matrix is well conditioned.  The stabilization is of face or ghost penalty 
type \cite{BH12,BuHaLa15,LaZa19} , and is added both to the bulk and interface spaces.  Previous related work work on cut finite element methods include the interface problem \cite{HaHa02};
overlapping meshes \cite{HanHanLar}; coupled bulk-surface problems 
\cite{BurHanLarZah16,BuHaLa19,BuHaLa19b} and \cite{GroOlsReu15}; mixed dimensional problems \cite{BurHanLarLar18}, and surface partial 
differential equations \cite{ BuHaLa15,OlsReu17}.  For a general introduction to cut finite element methods we refer to \cite{BCHL15}.

The outline of the paper is as follows: In Section 2 we introduce the model problem 
and discuss the relation between our formulation of the interface conditions and previous work; in Section 3 we formulate the cut finite element method; in Section 
4 we prove the basic properties of the formulation and in particular an optimal order a priori error estimate which is uniform in the full range of interface parameters; and in Section 5 we present numerical results.

\section{The Model Problem}

\subsection{Governing Equations}
Let $\Omega$ be a convex polygonal domain in $\IR^d$, 
$d=2$ or $3$, with boundary $\partial \Omega$ and 
exterior unit normal $n$. Let $\Gamma$ be a smooth embedded 
interface in $\Omega$, which partitions $\Omega$ into 
two subdomains $\Omega_1$ and $\Omega_2$ with exterior unit normals 
$n_1$ and $n_2$. We assume that
$\Gamma$ is a closed surface without boundary residing 
in the interior of $\Omega$, more precisely we assume that there 
is $\delta_0>0$ such that the distance between $\Gamma$ and 
$\partial \Omega$ is larger than $\delta$. We consider for simplicity 
the case with homogeneous Dirichlet conditions on $\partial \Omega$.

The problem takes the form: find 
$u_i:\Omega_i \rightarrow \IR$ and $u_\Gamma :\Gamma\rightarrow \IR$
such that 
\begin{alignat}{3}
-\nabla \cdot A_i \nabla u_i &= f_i& \qquad &\text{in $\Omega_i$}\label{eq:lapbelt1}
\\  \label{eq:lapbelt}
-\nabla_\Gamma \cdot A_\Gamma  \nabla_\Gamma u_\Gamma &= f_\Gamma 
+ \llbracket n \cdot A \nabla u \rrbracket&\qquad &\text{on $\Gamma$}
\\ \label{eq:interface}
n \cdot A \nabla u + B (u - u_\Gamma) &=0& \qquad &\text{on $\Gamma$}
\\
u &= 0 & \qquad  &\text{on $\partial \Omega$}\label{eq:lapbelt2}
\end{alignat}
Here the jump (or sum) of the normal fluxes is defined by
\begin{equation}
\llbracket n \cdot A \nabla v \rrbracket = \sum_{i=1}^2 n_i \cdot A_i \nabla v_i,
\end{equation}
In the interface condition (\ref{eq:interface}), $B$ is a $2\times 2$ symmetric 
matrix with eigenvalues $\lambda_i$ such that 
$\lambda_i \in [0,\infty)$ %with $c_\lambda$ a positive constant 
and we used  the notation 
\begin{equation}
n \cdot A \nabla v
=
\left[
\begin{matrix}
n_1 \cdot A_1 \nabla v_1 
\\
n_2 \cdot A_2 \nabla v_2
\end{matrix}
\right],\qquad 
v - v_\Gamma =
\left[
\begin{matrix}
v_1 - v_\Gamma
\\
v_2 - v_\Gamma
\end{matrix}
\right]
\end{equation}
and thus in component form (\ref{eq:interface}) reads
\begin{equation}
\left[
\begin{matrix}
n_1 \cdot A_1 \nabla u_1 
\\
n_2 \cdot A_2 \nabla u_2
\end{matrix}
\right]
+
B
\left[
\begin{matrix}
u_1 - u_\Gamma
\\
u_2 - u_\Gamma
\end{matrix}
\right]
=
\left[
\begin{matrix}
0
\\
0
\end{matrix}
\right]
\end{equation}
The coefficients $A_1$, $A_2$, are smooth uniformly positive definite symmetric 
$d \times d$ matrices, $A_\Gamma$ is smooth tangential to $\Gamma$ 
and uniformly positive definite on the tangent space of $\Gamma$, so 
that 
\begin{equation}
\sum_{i=1}^2 \| \nabla v_i \|^2_{\Omega_i} 
+ \| \nabla_\Gamma v_\Gamma \|^2_\Gamma 
\lesssim 
\sum_{i=1}^2 (A_i \nabla v_i, \nabla v_i )_{\Omega_i} 
+ (A_\Gamma \nabla_\Gamma v_\Gamma,  \nabla_\Gamma v_\Gamma)
\end{equation}
where $\lesssim$ denotes less or equal up to a constant. Finally we assume 
$f_i\in L_2(\Omega_i)$ and $f_\Gamma \in L_2(\Gamma)$.

\begin{remark}
Several generalizations are possible on the external boundary. For instance, we may let the interface intersect the boundary of $\Omega$. In this case we let 
$\nu$ denote the unit exterior conormal to $\Gamma \cap \partial \Omega$, i.e. $\nu$ is tangent to $\Gamma$ and normal to $\partial \Omega \cap \Gamma$, 
and we assume that $\nu \cdot n \geq c > 0$ for some constant $c$ so
that the interface is transversal to $\partial \Omega$. We may then
enforce the Dirichlet condition $u_\Gamma = g_\Gamma$ on $\partial
\Omega \cap \Gamma$ (see \cite{BHLM19}) or some other standard boundary condition.
\end{remark}

\begin{remark} In practical modeling we may want to take the thickness of the interface inte 
account. Assuming that the permeability matrix in an interface of thickness $t$ takes the form 
\begin{equation}
A|_{U_{t/2}(\Gamma)} = A^e_\Gamma + a^e_{\Gamma} n_\Gamma \otimes n_\Gamma
\end{equation}
where $n_\Gamma$ is a unit normal vector field to $\Gamma$, $U_{t/2}(\Gamma)$ is the set of points with distance less than $t/2$ to $\Gamma$, $v^e$ denotes the extension of a function 
$v$ on $\Gamma$ that is constant in the normal direction, $A_\Gamma$ is the tangential tangential permeability tensor, and finally $a_{\Gamma,n}$ is the permeability across the interface. Also assuming that $f = f_\Gamma^e$ and $u = u^e$ in $U_{t/2}(\Gamma)$, the equation on the interface (\ref{eq:lapbelt}) may be modelled as follows
\begin{alignat}{3}
 \label{eq:lapbelt-thickness}
-\nabla_\Gamma \cdot t A_\Gamma  \nabla_\Gamma u_\Gamma &=  t f_\Gamma 
+ \llbracket n \cdot A \nabla u \rrbracket&\qquad &\text{on $\Gamma$}
\end{alignat}
Note that the last term on the right hand side does not scale with $t$ since it accounts for flow 
into the crack from the bulk domains.
\end{remark}
%\todo{ok?}
\begin{remark} \label{Remark_2} We comment on how our interface condition (\ref{eq:interface}) relates to the condition in \cite{MaJaRo05} and later reformulated, see \cite{AntFac19}, in terms of averages and jumps of the bulk fields across the interface.
The interface conditions in \cite{MaJaRo05}, equations (3.18) and (3.19), takes the form 
\begin{align}\label{eq:bc-jaffre-a}
\xi n_1 \cdot A_1 \nabla v_1 - (1-\xi) n_2 \cdot A_2 \nabla v_2 
&= \alpha (v_\Gamma - v_1)
\\ \label{eq:bc-jaffre-b}
\xi n_2 \cdot A_2 \nabla v_2 - (1-\xi) n_1 \cdot A_1 \nabla v_1
&= \alpha (v_\Gamma - v_2)
\end{align}
where $\xi$ and $\alpha$ are parameters. The parameter $\alpha$ is related to 
physical properties of the interface as follows
\begin{equation}
\alpha = \frac{2 a_{\Gamma,n}}{d}
\end{equation}
where $a_{\Gamma,n}$ is the permeability coefficient across the interface 
$\Gamma$ and $d$ is the thickness of the interface, see (3.8) in \cite{MaJaRo05}
%\todo{check the factor $2$ here}
In matrix form we obtain 
\begin{equation}
\left[
\begin{matrix}
\xi & \xi - 1
\\
\xi - 1  & \xi
\end{matrix}
\right]
\left[
\begin{matrix}
n_1 \cdot A_1 \nabla v_1 
\\
n_2 \cdot A_2 \nabla v_2
\end{matrix}
\right]
+
\left[
\begin{matrix}
\alpha & 0
\\
0 & \alpha
\end{matrix}
\right]
\left[
\begin{matrix}
v_1 - v_\Gamma
\\
v_2 - v_\Gamma
\end{matrix}
\right]
= 0
\end{equation}
which leads to 
\begin{equation}\label{eq:B-numerics}
B = 
\frac{1}{2\xi - 1}
\left[
\begin{matrix}
\xi &  1 - \xi
\\
1 - \xi & \xi
\end{matrix}
\right]
\left[
\begin{matrix}
\alpha & 0 
\\
0 & \alpha
\end{matrix}
\right]
=
\frac{\alpha}{2\xi - 1}
\left[
\begin{matrix}
\xi &  1 -\xi
\\
1 - \xi  & \xi
\end{matrix}
\right]
\end{equation}
We note that we have the eigen pairs 
\begin{equation}\label{eq:B-eig}
B e_1
= \underbrace{\frac{\alpha}{2\xi - 1}}_{\lambda_1}
e_1, 
\qquad 
B
e_2
=\underbrace{\alpha }_{\lambda_2}
e_2
\end{equation}
with the corresponding eigen vectors defined by
\[
e_1 = \frac{1}{\sqrt{2}} \left[
\begin{matrix}
1
\\
1
\end{matrix}
\right] \quad \mbox{ and } e_2  = \frac{1}{\sqrt{2}} \left[
\begin{matrix}
1
\\
-1
\end{matrix}
\right] 
\]
and thus $B$ is positive definite for $\xi >1/2$, singular for $\xi=1/2$, and indefinite for $\xi<1/2$. It is therefore natural to consider the case when $\alpha>0$ and $\xi>1/2$. \emph{We remark that when $\alpha$ tends to infinity both eigenvalues tend to infinity and when $\xi$ tends to $1/2$ from above one eigenvalue tends 
to infinity. It is therefore important to construct a method which is robust in the 
full range $\lambda_i \in (0,\infty)$ of  possible values for the two eigenvalues}

To see the relation to the formulation of the interface conditions in \cite{AntFac19}  
we first note that we have the expansions
\begin{equation}\label{eq:exp-a}
\left[
\begin{matrix}
n_1 \cdot A_1 \nabla v_1 
\\
n_2 \cdot A_2 \nabla v_2
\end{matrix}
\right]
= 
2^{-1/2} \llbracket  n\cdot A \nabla \rrbracket ~
e_1
+
2^{1/2} \llangle n\cdot A \nabla v \rrangle ~
e_2 
\end{equation}
\begin{equation}\label{eq:exp-b}
\left[
\begin{matrix}
v_1 - v_\Gamma 
\\
v_2 - v_\Gamma
\end{matrix}
\right]
= 
2^{1/2} (\llangle v  \rrangle - v_\Gamma)
~ e_1
+
2^{-1/2} \llbracket v \rrbracket 
~ e_2
\end{equation}
where the jumps and averages of the bulk fields across the the interface 
are defined by
\begin{equation}
\llbracket n \cdot A \nabla v \rrbracket = \sum_{i=1}^2 n_i \cdot A_i \nabla v_i,
\qquad
\llbracket v \rrbracket = v_1 - v_2
\end{equation}
\begin{equation}
\llangle n \cdot A \nabla v \rrangle 
= \frac{1}{2}(n_1 \cdot A_1 \nabla v_1 - n_2 \cdot A_2 \nabla v_2),
\qquad \llangle v \rrangle = \frac{1}{2}( v_1 + v_2 )
\end{equation}
Using the expansions (\ref{eq:exp-a}) and 
(\ref{eq:exp-b}) together with  (\ref{eq:B-eig}) and matching the coefficients associated with each eigenvector we obtain the interface conditions
\begin{gather}
\llbracket n \cdot A\nabla v \rrbracket 
+ \frac{2 \alpha}{2\xi - 1 }  (\llangle v \rrangle - v_\Gamma ) =0
\\
 \llangle n  \cdot A \nabla v \rrangle 
 +
  \frac{\alpha}{2}\llbracket v \rrbracket = 0
\end{gather}
which are precisely the conditions used in \cite{AntFac19}. % \eta_\Gamma = \alpha^{-1}
\end{remark}

\subsection{Weak Form}
Define the function spaces
\begin{align}
V&=V_1 \oplus V_2 \oplus V_\Gamma
\\
V_i &=\{ v_i \in  H^1(\Omega_i) : v =0 \text{ on $\partial \Omega \cap \partial \Omega_i$}\}\qquad i =1,2
\\
V_\Gamma &=\{ v_\Gamma \in  H^1(\Gamma) : v =0 \text{ on $\partial \Omega \cap \Gamma$}\}
\end{align}
and let $v\in V$ denote the vector $v=(v_1,v_2,v_\Gamma)$. We will
also use the notation $\tilde V$ for functions $v \in V$ such that
$v_i \in H^{\frac32+\epsilon}(\Omega_i)$, $i=1,2$, and $v_\Gamma \in
H^{\frac32+\epsilon}(\Gamma)$, with $\epsilon >0$. Using partial integration on $\Omega_i$ we obtain
\begin{align*}\nonumber
\sum_{i=1}^2 (f_i,v_i)_{\Omega_i} 
&= \sum_{i=1}^2 (-\nabla \cdot A_i  \nabla  u_i, v_i)_{\Omega_i}
\\
& = \sum_{i=1}^2 ( A_i  \nabla  u_i, \nabla v_i)_{\Omega_i}
- (n_i \cdot A_i \nabla u_i,v_i)_{\partial \Omega_i}
\\
& = \sum_{i=1}^2 ( A_i  \nabla  u_i, \nabla v_i)_{\Omega_i}
- (n_i \cdot A_i \nabla u_i,v_i -v_\Gamma)_{\partial \Omega_i}
- (n_i \cdot A_i \nabla u_i, v_\Gamma)_{\partial \Omega_i}
\\
& = \sum_{i=1}^2 ( A_i  \nabla  u_i, \nabla v_i)_{\Omega_i}
- (n\cdot A \nabla u, v - v_\Gamma)_{\Gamma}
- (\llbracket n \cdot A \nabla u \rrbracket, v_\Gamma)_{\Gamma}
\\
& = \sum_{i=1}^2 ( A_i  \nabla  u_i, \nabla v_i)_{\Omega_i} 
+ (B(u-u_\Gamma), v - v_\Gamma)_{\Gamma}
\\ 
&\qquad
+ (A_\Gamma \nabla_\Gamma u_\Gamma, \nabla_\Gamma v_\Gamma)_{\Gamma}
- (f_\Gamma,v_\Gamma)_\Gamma
\end{align*}
Thus we arrive at the weak problem: find $u=(u_1,u_2,u_\Gamma) \in V$ such that
\begin{align}\label{eq:prob-weak}
\boxed{\mathcal{A}(u,v) = L(v)\qquad \forall v \in V}
\end{align}
where the forms are defined by
\begin{align}\label{eq:form-A}
\mathcal{A}(u,v) 
& = \sum_{i=1}^2 ( A_i  \nabla  u_i, \nabla v_i)_{\Omega_i} 
+ (A_\Gamma \nabla_\Gamma u_\Gamma, \nabla_\Gamma v_\Gamma)_{\Gamma}
+ (B(u-u_\Gamma), v - v_\Gamma)_{\Gamma}
\\ \label{eq:form-L}
L(v) &=\sum_{i=1}^2 (f_i,v_i)_{\Omega_i} + (f_\Gamma,v_\Gamma)_\Gamma
\end{align}

\subsection{Existence and Uniqueness}

Introducing the energy norm 
\begin{align}
\tn v \tn^2 = \sum_{i=1}^2 \| v \|^2_{H^1(\Omega_i)} 
+ \| v_\Gamma \|^2_{H^1(\Gamma)} + \| v - v_\Gamma \|^2_\Gamma
\end{align}
on $V$, we directly find using a Poincar\'e inequality and the 
Cauchy-Schwarz inequality that the form $A$ is coercive and 
continuous 
\begin{align}
\tn v \tn^2 \lesssim \mathcal{A}(v,v), \qquad 
\mathcal{A}(v,w) \lesssim  \tn v \tn \, \tn w \tn
\end{align}
Furthermore, $L$ is a continuous functional on $V$ and it follows from the 
Lax-Milgram Lemma that  there is a unique solution in $V$ to (\ref{eq:prob-weak}).

In the case considered here where $\Gamma$ is a smooth, closed
interface the model problem (\ref{eq:prob-weak}) satisfies the elliptic regularity 
estimate 
\begin{equation}\label{eq:ell-reg}
\boxed{
\| u_1 \|_{H^2(\Omega_1)} + \| u_2 \|_{H^2(\Omega_2)}  
+ \| u_\Gamma \|_{H^2(\Gamma)} 
\lesssim    
\| f_1 \|_{\Omega_1}  + \| f_2 \|_{\Omega_2} + \| f_\Gamma \|_\Gamma}
\end{equation}
This follows in a straightforward manner from standard regularity
theory. First note that since $u_i \in H^1(\Omega_i)$, $i=1,2,$ and
$u_\Gamma \in H^1(\Gamma)$ we have  $B(u - u_\Gamma)\vert_{\Gamma}
\in [H^{\frac12}(\Gamma)]^2$ and using \eqref{eq:interface}, $n \cdot A \nabla u \in
[H^{\frac12}(\Gamma)]^2$. This means that the right hand side of
\eqref{eq:lapbelt} is in $L^2$ and hence $u_\Gamma \in H^2(\Gamma)$ by
elliptic regularity. Considering once again \eqref{eq:interface} we
see that in each subdomain the solution coincides with a single domain
solution with a Robin condition with data in $H^{\frac12}(\Gamma)$ on
$\Gamma$. By the elliptic regularity of the Robin problem we can then
conclude that \eqref{eq:ell-reg} holds.

\section{A Robust Finite Element Method}
\subsection{The Mesh and Finite Element Spaces}
To formulate the finite element method we introduce the following notation:
\begin{itemize}
\item Let $\mcT_{h,0}$ be a quasiuniform mesh on $\Omega$ with mesh 
parameter $h \in (0,h_0]$. Define the active meshes 
\begin{align}
\mcT_{h,i} =  \{ T \in \mcT_{h,0} : T \cap \Omega_i \neq \emptyset \}
\quad i=1,2
,
\qquad 
\mcT_{h,\Gamma} =   \{ T \in \mcT_{h,0} : T \cap \Gamma \neq \emptyset \}
\end{align}
associated with the bulk domains $\Omega_i$, $i=1,2,$ and interface 
$\Gamma$, and the domains covered by the meshes
\begin{equation}
O_{h,i} = \cup_{T\in \mcT_{h,i}} \quad i=1,2, \qquad O_{h,\Gamma} = \cup_{T\in \mcT_{h,\Gamma}}
\end{equation}

\item Let $\mcT_{h,i}(\Gamma) = \{ T \in \mcT_{h,i} : T \cap \Gamma \neq \emptyset \}$ and define $\mcF_{h,i}$ as the set of all interior faces associated with an element in $\mcT_{h,i}(\partial \Omega_i)$. 

\item Let $\mcF_{h,\Gamma}$ be the set of all interior faces in $\mcT_{h,\Gamma}$ and $\mcK_{h,\Gamma} = \{ K = T \cap \Gamma : T \in \mcT_{h,\Gamma} \}$.

\item Let $V_{h,0}$ be the space of continuous piecewise linear functions on $\mcT_{h,0}$ and define 
\begin{equation}
V_{h,i} = V_{h,0}|_{\mcT_{h,i}}\quad i=1,2, \qquad V_{h,\Gamma} 
= V_{h,0}|_{\mcT_{h,\Gamma}}
\end{equation}
and 
\begin{equation}
V_h = V_{h,1} \oplus V_{h,2} \oplus V_{h,\Gamma}
\end{equation}
\end{itemize}

\subsection{Standard  Formulation}
The standard finite element method takes the form: find 
$u_h = (u_{h,1},u_{h,2},u_{h,\Gamma}) \in V_h = V_{h,1} 
\oplus V_{h,2} \oplus V_{h,\Gamma}$ such that 
\begin{align}\label{eq:FEM}
\mathcal{A}^S_h(u_h,v) = L(v) \qquad \forall v \in V_h
\end{align} 
Here the form $\mathcal{A}^S_h$ is defined by 
 \begin{equation}
\mathcal{A}^S_h = \mathcal{A} + s_h
 \end{equation}
where $s_h$ is a stabilization term of the form 
\begin{equation}
s_h = s_{h,1} + s_{h,2} + s_{h,\Gamma}
\end{equation}
with
\begin{equation*}
s_{h,i}(v,w) = \sum_{F \in \mcF_{h,i} } h_F \|\zeta(A_i)\|_{\infty,F}
(\llbracket n\cdot \nabla v\rrbracket,  \llbracket n\nabla w \rrbracket)_{F},
\qquad i=1,2
\end{equation*}
where $\zeta(X)$ denotes the maximum eigenvalue of the matrix $X$,
\begin{align*}
s_{h,\Gamma}(v,w) & = 
\sum_{F \in \mcF_{h,\Gamma} } h_F
                    \|\zeta(A_\Gamma)\|_{\infty,F\cap\Gamma}
                    (\llbracket n\cdot \nabla v \rrbracket, \llbracket
                    n
                    \nabla w \rrbracket)_{\mcF_{h,\Gamma}} \\
& + 
\sum_{T \in \mcT_{h,\Gamma} }h_K^2\|\zeta(A_\Gamma)\|_{\infty,K\cap\Gamma}
(n_\Gamma \cdot \nabla v,  n_\Gamma \cdot \nabla w)_{T \cap \Gamma}.
\end{align*}

\subsubsection{Properties of the Stabilization Terms}

The rationale for the design of the stabilizing terms is that they
improve the stability, while remaining consistent for sufficiently
smooth solutions.

Accuracy relies on the
following consistency property that is immediate from the definitions
above. For any function $v \in H^{\frac32+\epsilon}(O_{h,i})$ there
holds $s_{h,i}(v,w) = 0$ for all $w \in V_{h,i}+H^{\frac32+\epsilon}(O_{h,i})$, $i=1,2$. For any
function $v \in H^{\frac32+\epsilon}(O_{h,\Gamma})$, such that
$n_\Gamma \cdot \nabla v =0$ on $\Gamma$ there holds
$s_{h,\Gamma}(v,w) = 0$ for all $w \in V_{h,\Gamma}+H^{\frac32+\epsilon}(O_{h,\Gamma})$.

The stability properties are well known and we
collect them in the following Lemma.

\begin{lemma}\label{lem:ghost_stab} There are constants such that 
\begin{equation}\label{eq:stab-bulk}
\| \nabla v \|^2_{A_i, O_{h,i} }
\lesssim \| \nabla v \|^2_{A_i,\Omega_i} 
+ \| v \|^2_{s_{h,i}} \qquad i =1,2
\end{equation}
and 
\begin{equation}\label{eq:stab-interface}
\| \nabla_\Gamma v \|^2_{A_\Gamma, O_{h,\Gamma}} \lesssim 
\| \nabla_\Gamma v \|^2_{A_\Gamma,\Gamma} 
+ \| v \|^2_{s_{h,\Gamma}}
\end{equation}
where we introduced the (semi) norm $\| v \|^2_{s_h} = s_h(v,v)$.
\end{lemma}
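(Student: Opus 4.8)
The plan is to prove the two estimates \eqref{eq:stab-bulk} and \eqref{eq:stab-interface} by the standard ghost-penalty argument: the face-based stabilization terms control the difference between the norm on the \emph{active} (fictitious) domain $O_{h,i}$ and the norm on the \emph{physical} domain $\Omega_i$. The key structural fact is that every element $T$ in the active mesh $\mcT_{h,i}$ can be connected to an element that has a substantial intersection with $\Omega_i$ by a short path of face-neighbours, and the jump terms $s_{h,i}$ penalize precisely the gradient discontinuities across the faces traversed by such paths. Throughout I will use that $v$ is a piecewise linear finite element function, so that $\nabla v$ is piecewise constant.

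First I would establish the elementwise building block. For a discrete function $v\in V_{h,i}$ and two elements $T,T'$ sharing a face $F$, one compares the constant values $\nabla v|_T$ and $\nabla v|_{T'}$ via the jump across $F$; a scaling/inverse estimate on quasiuniform simplices gives $\|\nabla v\|^2_T \lesssim \|\nabla v\|^2_{T'} + h_F\,\|\,\llbracket n\cdot\nabla v\rrbracket\,\|^2_F$, up to the weight $\|\zeta(A_i)\|_{\infty}$ that converts the plain $L^2$-gradient norm into the $A_i$-weighted one $\|\nabla v\|^2_{A_i,T}=(A_i\nabla v,\nabla v)_T$. Iterating this one-step estimate along a face-connected path of uniformly bounded length from a ``bad'' cut element $T$ (where $|T\cap\Omega_i|$ may be arbitrarily small) to a ``good'' uncut element $T'\subset\Omega_i$, and summing, yields control of $\|\nabla v\|^2_{A_i,T}$ by the physical-domain contribution on $T'$ plus the accumulated face penalties. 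The geometric input here is that $\Omega_i$ is a fixed smooth domain, so for $h$ small enough such bounded-length paths exist uniformly; this is the fat-element / macro-element covering argument standard in the CutFEM literature and can be cited from \cite{BH12,BuHaLa15}.

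Summing the elementwise bound over all $T\in\mcT_{h,i}$ then gives \eqref{eq:stab-bulk}, since the left-hand side is $\|\nabla v\|^2_{A_i,O_{h,i}}=\sum_{T\in\mcT_{h,i}}\|\nabla v\|^2_{A_i,T}$, the ``good'' contributions sum to at most a fixed multiple of $\|\nabla v\|^2_{A_i,\Omega_i}$ (using finite overlap of the paths, which follows from quasiuniformity), and the face penalties sum to $\|v\|^2_{s_{h,i}}$. For the interface estimate \eqref{eq:stab-interface} the argument is analogous but lives on the surface mesh $\mcK_{h,\Gamma}$: the first sum in $s_{h,\Gamma}$ plays exactly the role of the bulk face penalty for the tangential gradient $\nabla_\Gamma v$, propagating control from cut surface pieces $T\cap\Gamma$ of small measure to neighbours with larger intersection, while the second (normal-derivative) term $h_K^2\,(n_\Gamma\cdot\nabla v,n_\Gamma\cdot\nabla v)_{T\cap\Gamma}$ is needed to pass between the full volumetric gradient $\nabla v$ carried by the bulk finite element functions and its tangential part $\nabla_\Gamma v$ on $\Gamma$; it accounts for the component of $\nabla v$ that is invisible to the surface norm but nonetheless appears when connecting neighbouring cut elements.

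The main obstacle is the geometric path-construction step: guaranteeing, uniformly in $h$ and in the (arbitrary) position of $\Gamma$ relative to $\mcT_{h,0}$, that each element of the active mesh is joined to a well-intersected element by a chain of face-neighbours of bounded length, so that the number of iterations of the one-step bound does not blow up. This is exactly the point at which robustness with respect to the cut location enters, and where the smoothness of $\Gamma$ together with quasiuniformity of $\mcT_{h,0}$ must be invoked. Once this covering is in hand, the remaining steps are routine scaling and summation, which is why the authors state the result as ``well known'' and presumably give only a brief justification or reference.
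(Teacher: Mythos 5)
Your sketch is correct and reproduces the standard ghost-penalty argument (face-connected chains of bounded length from cut elements to well-intersected elements, one-step jump estimates for piecewise linears, and the normal-derivative term to pass between the full and tangential gradients on $\Gamma$), which is exactly the argument the paper relies on: its proof consists only of the citation to \cite{BH12}, \cite{BuHaLa15}, and \cite{LaZa19} ``with minor modifications to account for the varying coefficients''. You have in effect written out the proof the paper delegates to the literature, including the point about absorbing the weights $\|\zeta(A_i)\|_{\infty}$, so there is nothing to object to.
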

\begin{proof} See \cite{BH12}, \cite{BuHaLa15}, and  \cite{LaZa19},
  with minor modifications to account for the varying coefficients. 
\end{proof}
\begin{remark}
Observe that the hidden constants in Lemma \ref{lem:ghost_stab} depend
on the variation of the $A_i$ and $A_\Gamma$. 
\end{remark}

\subsection{Robust Formulation}\label{sec:robust}
The stabilizing terms ensure robustness irrespective of the
intersection of the fracture and the mesh. They do not counter
instabilities due to degenerate $B$. Our aim is to design a formulation which is robust in the case when the
eigenvalues of $B$ degenerate. Indeed as we saw above as $\xi$
approaches $1/2$, $\lambda_1$ blows up. For clarity we recall the abstract boundary condition 
\begin{equation}
n\cdot A\nabla v + B (v - v_\Gamma) = 0
\end{equation}
where we now assume that the matrix $B$ is a positive definite symmetric $2\times 2$ matrix with eigenvalues $\lambda_i $ and 
eigenvectors $e_i$, such that $\lambda_i \in  (0,\infty)$ and thus one or both eigenvalues may become very large or small. To handle this situation  we 
instead enforce 
\begin{equation}
B^{-1} n\cdot A\nabla v + (v - v_\Gamma) = 0
\end{equation}
weakly using a modified Nitsche method. This approach was originally developed 
in \cite{JuSt09} where fitted finite element approximation of Robin conditions were considered. 

\paragraph{Derivation of an Alternative Weak Form.} As before we have 
the identity 
\begin{align}
L(v) 
& = \underbrace{\sum_{i=1}^2 ( A_i  \nabla  u_i, \nabla v_i)_{\Omega_i} +
(A_\Gamma \nabla_\Gamma u_\Gamma, \nabla_\Gamma v_\Gamma)_{\Gamma}}_{=:\mathcal{A}_1(u,v)}
- (n\cdot A \nabla u, v - v_\Gamma)_{\Gamma}
\\
&=\mathcal{A}_1(u,v) - (n\cdot A \nabla u, v - v_\Gamma)_{\Gamma}
\end{align}
where we introduced the bilinear form $\mathcal{A}_1$ for brevity. To enforce the interface conditions we proceed as follows
\begin{align*}
L(v) 
&=\mathcal{A}_1(u,v) - (n\cdot A \nabla u, v - v_\Gamma)_{\Gamma}
\\
&=\mathcal{A}_1(u,v) + (n\cdot A \nabla u, B^{-1} (n \cdot A \nabla v) )_{\Gamma} 
\\
&\qquad 
- (n\cdot A \nabla u, B^{-1} (n \cdot A \nabla v) +  (v - v_\Gamma) )_{\Gamma}
\\
&=\mathcal{A}_1(u,v) + (n\cdot A \nabla u, B^{-1} (n \cdot A \nabla v) )_{\Gamma} 
\\
&\qquad 
- (n\cdot A \nabla u, B^{-1} (n \cdot A \nabla v) +  (v - v_\Gamma) )_{\Gamma}
\\
&\qquad 
- (B^{-1} (n \cdot A \nabla u) +  (u - u_\Gamma), n\cdot A \nabla v  )_{\Gamma}
\\
&\qquad 
+ (B^{-1} (n \cdot A \nabla u) +  (u - u_\Gamma) , \tau (B^{-1} (n \cdot A \nabla v) +  (v - v_\Gamma)) )_{\Gamma}
\end{align*}
where the last two terms are zero due to the interface condition and the resulting form on the right hand side is symmetric. Furthermore, $\tau$ is a stabilization parameter (a $2\times 2$ matrix) of the form
\begin{align}\label{eq:tau-def}
\tau = \sum_{i=1}^2 \tau_i e_i \otimes e_i , \qquad \tau_i = \frac{\lambda_i \beta}{ \lambda_i h + \beta} \quad i=1,2
\end{align} 
where $\beta$ is a positive parameter and we recall that $\lambda_i$ and $e_i$ are the eigenvalues and eigenvectors of $B$. The parameter $\beta$ is chosen 
so that 
\begin{equation}\label{eq:tau-beta-condition}
\| n \|^2_{A,\infty,\Gamma} := \sum_{i=1}^2 \| n_i \|^2_{A_i,\infty,\Gamma} \lesssim \beta
\end{equation}
where $\| w \|_{A_i,\infty,\Gamma} := \| A^{\frac12}_i w\|_{\infty, \omega}$ is the $A_i$ weighted 
$L^\infty$ norm over $\Gamma$.
\begin{remark} The choice of $\tau_i$ can be further refined as follows
\begin{align}\label{eq:tau-def-refined}
\tau_i = \frac{\lambda_i \beta_i}{ \lambda_i h + \beta_i} \quad i=1,2
\end{align}
with
\begin{equation}
\sum_{j=1}^2 \| n_j \|^2_{A_j,\infty,\Gamma} |e_{ij}|^2 \lesssim \beta_i
\end{equation}
where $e_i = [e_{i1}\; e_{i2}]^T$. This approach is beneficial in situations where the components of $e_i$ are very 
different and there is a large difference between the $\| n_j
\|^2_{A_j,\infty,\Gamma}$ with $j=1$ and $j=2$.
\end{remark}

\paragraph{The Robust Finite Element Method.} Find $u_h \in V_h$ such that 
\begin{equation}\label{eq:robustFEM}
\boxed{\mathcal{A}^R_h(u_h,v):=\mathcal{A}^R(u_h,v) + s_h(u_h,v) = L(v) \qquad \forall v \in V_h}
\end{equation}
 where 
 \begin{align}\label{eq:Ah}
 \mathcal{A}^R(v,w) &= \mathcal{A}_1(v,w) 
 + (n\cdot A \nabla v, B^{-1} (n \cdot A \nabla w) )_{\Gamma} 
\\
&\qquad 
- (n\cdot A \nabla v, B^{-1} (n \cdot A \nabla w) +  (w - w_\Gamma) )_{\Gamma}
\\
&\qquad 
- (B^{-1} (n \cdot A \nabla v) +  (v - v_\Gamma), n\cdot A \nabla w  )_{\Gamma}
\\
&\qquad 
+ (B^{-1} (n \cdot A \nabla v) +  (v - v_\Gamma) , \tau (B^{-1} (n \cdot A \nabla w) +  (w - w_\Gamma)) )_{\Gamma}.
%\\
%L_h(v) &= \sum_{i=1}^2 (f_i,v_i)_{\Omega_i} + (f_\Gamma,v_\Gamma)_\Gamma.
 \end{align}
It follows by the design of $\mathcal{A}^R$ that for a sufficiently smooth exact
solution $u \in \tilde V$ of the problem
\eqref{eq:prob-weak} %, where 
%\[
%H^2(\Omega_1\cup \Omega_2 \cup \Gamma) := \{v \in V:\, v_1 \in
%H^2(\Omega_1),\, v_2 \in
%H^2(\Omega_2) \mbox{ and }  v_\Gamma \in
%H^2(\Gamma) \},
%\]
there holds
\begin{equation}\label{eq:robust_consist}
\mathcal{A}(u,v) = \mathcal{A}^R(u,v) = L(v), \quad \forall v \in (V
\cap H^2(\Omega_1\cup \Omega_2 \cup \Gamma) + V_h.
\end{equation}
As a consequence we immediately get the Galerkin orthogonality
\begin{lemma}
Let $u \in \tilde V$ be the solution of
\eqref{eq:prob-weak} and $u_h \in V_h$ the solution of
\eqref{eq:robustFEM} then there holds
\begin{equation}\label{eq:galortho}
\mathcal{A}^R(u - u_h,v) = s_h(u_h,v)\quad \forall v \in V_h.
\end{equation}
\end{lemma}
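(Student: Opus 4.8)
The plan is to obtain the orthogonality relation directly from two facts already established: the consistency identity \eqref{eq:robust_consist} satisfied by the exact solution, and the defining equation \eqref{eq:robustFEM} of the discrete solution. No new estimates are required; once consistency is in hand the statement is purely algebraic, following from the bilinearity of $\mathcal{A}^R$.

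First I would fix an arbitrary test function $v \in V_h$ and note that $V_h$ is contained in the admissible test space $(V \cap H^2(\Omega_1 \cup \Omega_2 \cup \Gamma)) + V_h$ appearing in \eqref{eq:robust_consist}. Hence consistency applies to this $v$ and yields $\mathcal{A}^R(u,v) = L(v)$. On the other hand, since $u_h$ solves the robust method, testing \eqref{eq:robustFEM} with the same $v$ gives $\mathcal{A}^R(u_h,v) + s_h(u_h,v) = L(v)$. Subtracting these two identities and using the bilinearity of $\mathcal{A}^R$ in its first argument produces $\mathcal{A}^R(u-u_h,v) = s_h(u_h,v)$, which is the claim; since $v \in V_h$ was arbitrary, the relation holds for all $v \in V_h$.

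I would emphasize that the stabilization term survives on the right-hand side precisely because consistency is asserted only for $\mathcal{A}^R$ and \emph{not} for the full form $\mathcal{A}^R_h = \mathcal{A}^R + s_h$; one should not claim $s_h(u,v) = 0$ for the exact solution. Indeed, while the bulk contributions $s_{h,i}(u,\cdot)$ vanish by the consistency property recorded before Lemma \ref{lem:ghost_stab} (since $u_i \in H^{\frac32+\epsilon}(O_{h,i})$), the interface stabilization $s_{h,\Gamma}(u,\cdot)$ carries the normal-derivative contribution $n_\Gamma \cdot \nabla v$ and need not vanish for the exact solution. The proof therefore never invokes $s_h(u,\cdot)$, relying solely on the stabilization-free identity $\mathcal{A}^R(u,v)=L(v)$.

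The only genuine content sits upstream, in the consistency identity \eqref{eq:robust_consist} itself, which I would regard as the main obstacle were one to prove the lemma entirely from first principles. There the key point is that the last two pairings in the definition \eqref{eq:Ah} of $\mathcal{A}^R$ vanish on the exact solution: the interface condition $n \cdot A \nabla u + B(u - u_\Gamma) = 0$ together with invertibility of $B$ gives $B^{-1}(n \cdot A \nabla u) + (u - u_\Gamma) = 0$ on $\Gamma$, annihilating both terms that carry this factor. The remaining terms then reduce, via the integration-by-parts computation used to derive the weak form, to $\mathcal{A}(u,v) = L(v)$. The regularity hypothesis $u \in \tilde V$ is exactly what guarantees that the normal fluxes $n \cdot A \nabla u$ and the traces $u - u_\Gamma$ on $\Gamma$ are well defined, so that these pairings make sense. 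Once this identity is available, the orthogonality follows by the elementary subtraction above.
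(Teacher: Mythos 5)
Your proof is correct and follows exactly the paper's argument: the paper's proof is the one-line remark that the lemma ``follows by combining \eqref{eq:robust_consist} and \eqref{eq:robustFEM},'' which is precisely the subtraction you carry out, and your additional remarks on why consistency holds and why $s_h(u,\cdot)$ is never invoked are accurate elaborations consistent with the surrounding text.
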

\begin{proof}
The proof follows by combining \eqref{eq:robust_consist} and \eqref{eq:robustFEM}.
\end{proof}
\section{Error Estimates}

\subsection{The Energy Norm} We introduce the energy norm 
\begin{equation}\label{eq:disc_energy}
\tn v \tn_h^2 
=
\sum_{i=1}^2 \| \nabla v_i \|^2_{A_i,\Omega_i} 
+ h \|  \nabla v_i \|^2_{A_i,\Gamma}
+ \| v \|^2_{s_{h}}
+ \| \nabla_\Gamma v_\Gamma \|^2_{A_\Gamma,\Gamma}
+ \|v  - v_\Gamma\|^2_{\tau,\Gamma}
\end{equation}
where $\| w \|^2_{\psi,\omega} = \int_\omega \psi w^2$ is the $\psi$ weighted 
$L^2$ norm over the set $\omega$.

\subsection{Interpolation Error Estimates}

We begin by introducing the interpolation operators and derive the basic approximation
error estimates. Then collecting the estimates we show an interpolation error 
estimate in the energy norm \eqref{eq:disc_energy}. Since the stabilization operator acts on the finite element solution
outside its physical domain of definition, we must make sense of the
solution it approximates also outside its physical domain of
definition. We will show below show how this can be done using extensions from
the physical geometry.

\paragraph{The Scott-Zhang Interpolant.} Given a mesh $\mcT_h$ covering a domain $O_h$ and the space of piecewise 
linear continuous finite elements $W_h$, the standard Scott-Zhang interpolation operator $\pi_{h,SZ} : H^1(\Omega_h) \rightarrow W_h$ satisfies the element wise estimate
%\begin{align}\label{eq:interpol-SZ}
%\| v - \pi_{h,i,SZ} v \|_{H^m(O_h)} 
%\lesssim 
%h^{2-m} \| v - \pi_{h,i,SZ} v \|_{H^s(O_h)} 
%\end{align}

\begin{align}\label{eq:interpol-SZ}
\| v - \pi_{h,i,SZ} v \|_{H^m(T)} 
\lesssim 
h^{2-m} \| v \|_{H^2(\mcN(T))}, \quad m=0,1
\end{align}
where $\mcN(T)$ is the set of all elements in $\mcT_{h,i}$ that share a node with $T$. 
Note also that the Scott-Zhang interpolant preserves homogeneous boundary conditions exactly. See \cite{ScZh90} for further details.

\paragraph{Bulk Domain Fields.}
It is shown in \cite[Section 2.3, Theorem 5]{stein70} that there is an extension operator $E_i: H^s(\Omega_i)\rightarrow H^s(\IR^d)$, 
not dependent on $s\ge 0$, which is stable in  the sense that 
\begin{equation}\label{eq:stability-ext}
\| E_i v_i \|_{H^s(\IR^d)} \lesssim \| v_i \|_{H^s(\Omega_i)}
\end{equation}
We define the interpolation operator $\pi_{h,i}:H^1(\Omega_i) \rightarrow V_{h,i}$ by
\begin{equation}
\pi_{h,i} v_i = \pi_{h,i,SZ} E_i v
\end{equation}
where $\pi_{h,i,SZ}:H^1(O_{h,i}) \rightarrow V_{h,i}$ is the Scott-Zhang
 interpolant and we recall that $O_{h,i} = \cup_{T \in \mcT_{h,i}} T$ is the 
 domain covered by $\mcT_{h,i}$. We then have the error estimate 
 \begin{equation}\label{eq:interpol-bulk}
\boxed{ \| v_i - \pi_{h,i} v \|_{H^m(\Omega_i)} 
 \lesssim h^{2-m} \| v_i \|_{H^2(\Omega_i)}\quad m=0,1}
 \end{equation}
\begin{proof} Using the notation $\rho_i = v_i - \pi_{h,i} v_i $ we obtain
\begin{align*}
\|\rho_i \|_{H^m(\Omega_i)} 
\lesssim
\| \rho_i \|_{H^m(O_{h,i})} 
\lesssim 
h^{2-m} \| E_i u_i \|_{H^2(O_{h,i})}
\lesssim 
h^{2-m} \| u_i \|^2_{H^2(\Omega_i)}
\end{align*}
where we used the fact that $\Omega_i \subset O_{h,i}$, the interpolation error 
estimate (\ref{eq:interpol-SZ}), and finally the stability (\ref{eq:stability-ext}) of 
the extension operator $E_i$.
\end{proof}

\paragraph{Interface Field.}
Let $p_\Gamma : U_\delta(\Gamma) \rightarrow \Gamma$ be the closest
point mapping from the tubular neighborhood $U_\delta(\Gamma):=\{x:
\mbox{dist}(x,\Gamma) < \delta \}$ to $\Gamma$, 
which is well defined for all $\delta \in (0,\delta_0]$ for some $\delta_0>0$. Define the extension operator $E_\Gamma : L^2(\Gamma) \rightarrow L^2(U_{\delta}(\Gamma))$ by $E_\Gamma v = v \circ p_\Gamma$. Since $\Gamma$ is smooth 
we have the stability estimate
\begin{equation}\label{eq:stability-extension-Gamma}
\| E_\Gamma v_\Gamma \ \|_{H^s(U_\delta(\Gamma))} \lesssim \delta^{1/2} \| v_\Gamma \|_{H^s(\Gamma)}.
\end{equation}
Observe also that since $n_\Gamma \cdot \nabla E_\Gamma v_\Gamma = 0$
by construction, and then assuming $s>3/2$ in
\eqref{eq:stability-extension-Gamma} we see that
\begin{equation}
s_{h,\Gamma}(E_\Gamma v_\Gamma,w) = 0, \forall w \in V_{h,\Gamma}+H^{\frac32+\epsilon}(O_{h,\Gamma}).
\end{equation}

To define the interpolant we first let
\begin{equation}
\mcT_{h,\delta,\Gamma} = \{ T \in \mcT_{h,0} : T\cap U_{\delta}(\Gamma)\neq \emptyset \}, 
\qquad 
O_{h,\delta,\Gamma} = 
\cup_{T\in \mcT_{h,\delta,\Gamma}} T
\end{equation}
for $\delta \in (0,\delta_0/2]$. Then $O_{h,\Gamma} \subset O_{h,\delta,\Gamma}   $ and there are $\delta,\delta'\in (0,\delta_0]$ such that 
$\delta \sim \delta' \sim h$ and 
\begin{equation}\label{eq:sets-inclusions}
U_{\delta}(\Gamma)   \subset O_{h,\delta,\Gamma} \subset U_{\delta'}(\Gamma)  \subset U_{\delta_0}(\Gamma)
\end{equation}
for all  $h \in (0,h_0]$, with $h_0$ small enough. We let 
$V_{h,\delta,\Gamma} = V_{h,0}|_{O_{h,\delta,\Gamma}}$ and define $\pi_{h,\Gamma}:H^1(\Gamma) \rightarrow V_{h,\Gamma}$ by
\begin{equation}
\pi_{h,\Gamma} v_\Gamma =( \pi_{h,\delta,\Gamma,SZ} E_\Gamma v_\Gamma )|_{O_{h,\Gamma}}
\end{equation}
where $\delta \sim h$ and $\pi_{h,\delta,\Gamma,SZ}:H^1(O_{h,\delta,\Gamma}) \rightarrow V_{h,\delta,\Gamma}$ is the Scott-Zhang interpolant. We have the 
error estimate 
 \begin{equation}\label{eq:interpol-interface}
\boxed{ \| v - \pi_{h,\Gamma} v \|_{H^m(\Gamma)} 
 \lesssim h^{2-m} \| v \|_{H^2(\Gamma)}\quad m=0,1}
 \end{equation}
\begin{proof} Using the trace inequality  
\begin{equation}
\| v \|^2_\Gamma 
\lesssim 
\delta^{-1} \| v \|^2_{U_\delta(\Gamma)} 
+ \delta \| \nabla v \|^2_{U_\delta(\Gamma) }
\qquad v \in H^1(U_\delta (\Gamma))
\end{equation}
where the hidden constant is independent of $\delta$, we obtain
\begin{align*}
\|\nabla_\Gamma^m \rho \|^2_{\Gamma}
&\lesssim 
\delta^{-1} \|\nabla^m \rho \|^2_{U_{\delta}(\Gamma)}
+\delta \| \nabla^{m+1} \rho \|^2_{U_{\delta} (\Gamma)}
\\
&\lesssim 
\delta^{-1} \|\nabla^m \rho \|^2_{O_{h,\delta,\Gamma}}
+\delta \| \nabla^{m+1} \rho \|^2_{O_{h,\delta,\Gamma}}
\\
&\lesssim 
\delta^{-1} h^{2(2-m)} \|\nabla^2 E_\Gamma v \|^2_{O_{h,\delta,\Gamma}}
+\delta h^{2(1-m)} \| \nabla^{2} E_\Gamma v \|^2_{O_{h,\delta,\Gamma}}
\\
&\lesssim 
\delta^{-1} h^{2(2-m)} \|\nabla^2 E_\Gamma v \|^2_{U_{\delta'}(\Gamma)}
+\delta h^{2(1-m)} \| \nabla^{2} E_\Gamma v \|^2_{U_{\delta'}(\Gamma)}
\\
&\lesssim 
\delta^{-1} \delta' h^{2(2-m)} \| v \|^2_{H^2(\Gamma)}
+\delta \delta' h^{2(1-m)} \| v \|^2_{H^2(\Gamma)}
\\
&\lesssim 
h^{2(2-m)} \| v \|^2_{H^2(\Gamma)}
\end{align*}
where we used (\ref{eq:sets-inclusions}), the interpolation error estimate (\ref{eq:interpol-SZ}), the stability (\ref{eq:stability-extension-Gamma})  
of the extension operator $E_\Gamma$, and the fact that 
$\delta \sim \delta' \sim h$.
\end{proof}

We define the interpolation operator $\pi_h: V \rightarrow V_h$ as follows
\begin{equation}
\pi_h v = (\pi_{h,1} E_1 v_1, \pi_{h,2} E_2 v_2, \pi_{h,\Gamma} E_\Gamma v_\Gamma )
\end{equation}

\begin{lemma}There is a constant not dependent on the matrix $B$, in the interface condition (\ref{eq:interface}), such that 
\begin{align}\label{eq:interpol}
\boxed{
\tn v - \pi_h v \tn_h \lesssim h \left( \sum_{i=1}^2 \| v_i \|_{H^2(\Omega_i)} + \| v_\Gamma \|_{H^2(\Gamma)} \right)
}
\end{align}
\end{lemma}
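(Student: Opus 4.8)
The plan is to set $\rho = v - \pi_h v$ with components $\rho_i = E_i v_i - \pi_{h,i} v_i$ (which coincides with $v_i - \pi_{h,i}v_i$ on $\Omega_i$) and $\rho_\Gamma = E_\Gamma v_\Gamma - \pi_{h,\Gamma} v_\Gamma$, and then to estimate the five contributions to $\tn \rho \tn_h^2$ one at a time. For the two genuinely bulk terms $\|\nabla \rho_i\|^2_{A_i,\Omega_i}$ and the interface term $\|\nabla_\Gamma \rho_\Gamma\|^2_{A_\Gamma,\Gamma}$ I would simply invoke the interpolation estimates \eqref{eq:interpol-bulk} and \eqref{eq:interpol-interface} with $m=1$, using that $A_i$ and $A_\Gamma$ are bounded; each contributes $\lesssim h^2$ times the corresponding $H^2$ seminorm. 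The remaining three terms all live on $\Gamma$ or on the stabilization faces, so the main tool is the cut-element trace inequality $\|w\|^2_{\Gamma \cap T} \lesssim h_T^{-1}\|w\|^2_T + h_T\|\nabla w\|^2_T$ (valid with a constant independent of how $\Gamma$ cuts $T$), combined with the observation — already visible in the proof of \eqref{eq:interpol-bulk} — that the bulk estimate in fact controls $\|\rho_i\|_{H^m(O_{h,i})} \lesssim h^{2-m}\|v_i\|_{H^2(\Omega_i)}$ over the whole active region, which contains $\Gamma$ and the neighborhood the trace uses.

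For the term $h\|\nabla \rho_i\|^2_{A_i,\Gamma}$ I would apply the trace inequality to each component of $\nabla \rho_i$, giving $\|\nabla \rho_i\|^2_\Gamma \lesssim h^{-1}\|\nabla \rho_i\|^2_{O_{h,i}} + h\|\nabla^2 \rho_i\|^2_{O_{h,i}}$; since $\pi_{h,i}$ produces a piecewise linear function, $\nabla^2 \rho_i = \nabla^2 E_i v_i$ elementwise, so the second factor is $\lesssim \|v_i\|_{H^2(\Omega_i)}$ and the first is $\lesssim h^2\|v_i\|^2_{H^2(\Omega_i)}$ by the $m=1$ estimate, whence the prefactor $h$ yields $\lesssim h^2\|v_i\|^2_{H^2(\Omega_i)}$. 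The stabilization contribution $\|\rho\|^2_{s_h}$ I would treat the same way: on each face the jump $\llbracket n\cdot\nabla E_i v_i\rrbracket$ vanishes because $E_i v_i \in H^2$, so only the interpolant's jumps survive, and a face trace inequality together with $\nabla^2\rho_i = \nabla^2 E_i v_i$ again gives $h^2\|v_i\|^2_{H^2(\Omega_i)}$; the normal-derivative part of $s_{h,\Gamma}$ is handled identically using $n_\Gamma\cdot \nabla E_\Gamma v_\Gamma = 0$, so that $\|\rho\|^2_{s_h}\lesssim h^2\big(\sum_i\|v_i\|^2_{H^2(\Omega_i)}+\|v_\Gamma\|^2_{H^2(\Gamma)}\big)$.

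The step I expect to be the crux, and the only place where uniformity in $B$ is at stake, is the weighted term $\|\rho - \rho_\Gamma\|^2_{\tau,\Gamma}$. The key observation is the pointwise bound on the eigenvalues of $\tau$: from \eqref{eq:tau-def}, $\tau_i = \lambda_i\beta/(\lambda_i h + \beta) \le \beta/h$ independently of $\lambda_i$, so that $\tau \preceq (\beta/h)\,I$ as symmetric matrices. Hence $\|\rho-\rho_\Gamma\|^2_{\tau,\Gamma} \le (\beta/h)\,\|\rho - \rho_\Gamma\|^2_\Gamma \lesssim (\beta/h)\big(\|\rho_1\|^2_\Gamma + \|\rho_2\|^2_\Gamma + \|\rho_\Gamma\|^2_\Gamma\big)$, with no remaining dependence on $B$. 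I would then bound the bulk $L^2(\Gamma)$ norms by the trace inequality and the $m=0,1$ estimates, $\|\rho_i\|^2_\Gamma \lesssim h^{-1}\|\rho_i\|^2_{O_{h,i}} + h\|\nabla\rho_i\|^2_{O_{h,i}} \lesssim h^3\|v_i\|^2_{H^2(\Omega_i)}$, and the interface norm directly by \eqref{eq:interpol-interface} with $m=0$, $\|\rho_\Gamma\|^2_\Gamma \lesssim h^4\|v_\Gamma\|^2_{H^2(\Gamma)}$.

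Combining, $\|\rho-\rho_\Gamma\|^2_{\tau,\Gamma} \lesssim \beta h^2\big(\sum_i\|v_i\|^2_{H^2(\Omega_i)} + \|v_\Gamma\|^2_{H^2(\Gamma)}\big)$, and since $\beta$ is fixed through the condition \eqref{eq:tau-beta-condition} by the coefficients $A_i$ only, this constant is independent of the interface matrix $B$. Summing the five estimates and taking square roots yields the claimed bound, with a constant depending on the $A$-coefficients (via $\beta$ and the ellipticity) but not on $B$, which is exactly the uniformity asserted in the lemma.
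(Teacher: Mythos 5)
Your proof is correct and follows essentially the same route as the paper: split the energy norm into its five contributions, use the bound $\tau\preceq(\beta/h)I$ (the paper's estimate \eqref{eq:tau-bounds-c}) to reduce the $\tau$-weighted term to unweighted $L^2(\Gamma)$ norms with a constant independent of $B$, and control the interface and stabilization terms by trace inequalities combined with the Scott--Zhang estimates on the active meshes and the stability of the extensions $E_i$, $E_\Gamma$. The only cosmetic difference is that you invoke the elementwise cut trace inequality where the paper uses the tubular-neighborhood version; both yield the same bounds.
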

\begin{proof} Let $v - \pi_h v = \rho$ be the interpolation error. Using the triangle inequality and (\ref{eq:tau-bounds-c}),
\begin{align*}
\tn \rho \tn_h^2 
&=
\sum_{i=1}^2 \| \nabla \rho_i \|^2_{A_i,\Omega_i} 
+ h \|  \nabla \rho_i \|^2_{A_i,\Gamma}
+ \| \nabla_\Gamma \rho_\Gamma \|^2_{A_\Gamma,\Gamma}
+\|\rho_i - \rho_\Gamma \|^2_{\tau,\Gamma}
+ \| \rho \|^2_{s_h}
\\
&\lesssim
\sum_{i=1}^2 \|  \nabla \rho_i \|^2_{\Omega_i} 
+ h \|   \nabla \rho_i \|^2_{\Gamma}
+ h^{-1} \|\rho_i \|^2_{\Gamma}
+ \| \rho_i \|^2_{s_{h,i}}
\\
&\qquad 
+ \| \nabla_\Gamma \rho_\Gamma \|^2_{\Gamma}
+h^{-1} \| \rho_\Gamma \|^2_{\Gamma}
+ \| \rho_\Gamma \|^2_{s_{h,\Gamma}}
\\
&\lesssim
\sum_{i=1}^2 
\left( \sum_{m=0}^2 h^{2(m-1)}  \| \rho_i \|^2_{H^m(O_{h,i})} \right)
\\
&\qquad 
+ \| \nabla_\Gamma \rho_\Gamma \|^2_\Gamma
+h^{-1} \| \rho_\Gamma \|^2_{\Gamma}  
+ \left( \sum_{m=1}^2 h^{2(m-1)}  \| \rho_i \|^2_{H^m(O_{h,\delta,\Gamma})} \right)
\\
&\lesssim \sum_{i=1}^2 h^2 \| E_i v_i \|^2_{H^2(O_{h,i})} 
+  h \| E_\Gamma v_\Gamma \|^2_{H^2(O_{h,\delta,\Gamma})}
\\
&\lesssim \sum_{i=1}^2 h^2 \| v_i \|^2_{H^2(\Omega_i)} 
+  h \delta' \| v_\Gamma \|^2_{H^2(\Gamma)}
\end{align*}
with $\delta' \sim h$ and the desired estimate follows. Here we used the bounds 
\begin{align}\label{eq:interpol-prf-a}
h \|   \nabla \rho_i \|^2_{\Gamma}
+ h^{-1} \|\rho_i \|^2_{\Gamma} 
&\lesssim 
\sum_{m=0}^2 h^{2(m-1)}  \| \rho_i \|^2_{H^m(\Omega_i)}
\\ \label{eq:interpol-prf-b}
\| \rho_i \|^2_{s_h,i} 
&\lesssim 
\sum_{m=1}^2 h^{2(m-1)}  \| \rho_i \|^2_{H^m(O_{h,i})}
\\ \label{eq:interpol-prf-c}
\| \rho_i \|^2_{s_h,\Gamma} 
&\lesssim 
\sum_{m=1}^2 h^{2(m-1)-1}  \| \rho_i \|^2_{H^m(O_{h,\delta,\Gamma})}
\end{align}
To prove (\ref{eq:interpol-prf-a}) we employ the trace inequality 
\begin{equation*}
\| v \|^2_\Gamma \lesssim 
\delta^{-1} \| v \|^2_{U_\delta(\Gamma)\cap \Omega_i} 
+ \delta \| \nabla v \|^2_{U_\delta(\Gamma)\cap \Omega_i}\qquad v \in H^1(\Omega_i)
\end{equation*}
with $\delta \sim h$, to estimate the interface terms involving $\rho_i$ 
as follows
\begin{equation*}
h \| \nabla \rho_i \|^2_{A_i,\Gamma} 
\lesssim 
\| \rho \|^2_{H^1(U_\delta (\Gamma) \cap \Omega_i)}
+
h^2 \| \rho \|^2_{H^2(U_\delta (\Gamma) \cap \Omega_i)}
\lesssim 
\| \rho \|^2_{H^1(O_{h,i})}
+
h^2 \| \rho \|^2_{H^2(O_{h,i})}
\end{equation*}
and 
\begin{equation*}
h^{-1} \| \rho_i \|^2_{\Gamma} 
\lesssim 
h^{-2} \| \rho_i \|^2_{U_\delta (\Gamma) \cap \Omega_i}
+
\| \rho_i \|^2_{H^1(U_\delta (\Gamma) \cap \Omega_i)}
\lesssim  
h^{-2} \| \rho \|^2_{O_{h,i}}
+
\| \rho \|^2_{H^1(O_{h,i})}
\end{equation*}
For (\ref{eq:interpol-prf-b}) we apply the elementwise trace inequality 
\begin{equation*}
\| v \|_F^2 \lesssim h^{-1} \| v \|^2_T + h \| \nabla v \|^2_T
\end{equation*}
which gives
\begin{equation*}
\| \rho \|^2_{s_{h,i}} \lesssim \sum_{m=1}^2 \sum_{T \in T_{h,i}}
(\|\nabla \rho \|^2_T + h \| \nabla^2 \rho \|^2_T) \lesssim \sum_{m=1}^2 h^{2(m-1)} \| v \|^2_{H^m(O_{h,i})} 
\end{equation*}
In a similar way we prove (\ref{eq:interpol-prf-c}), see \cite{BuHaLa15} and 
\cite{LaZa19} for details.
\end{proof}

\subsection{Continuity and Coercivity}
We start with a lemma collecting some useful estimates for expressions involving 
the stabilization parameter $\tau$ and then we prove continuity and coercivity of 
the form $A_h$. 

\begin{lemma} The following estimates related to the stabilization parameter $\tau$ hold
\begin{gather}\label{eq:tau-bounds-a}
 \|B^{-1} \tau B^{-1} + B^{-1}\|_{L^\infty(\Gamma)}  
 \leq \frac{h}{\beta}
 \\ \label{eq:tau-bounds-b}
 \|(B^{-1} \tau - I) \tau^{-1/2} \|_{L^\infty(\Gamma)}   \leq \left(\frac{h}{\beta}\right)^{1/2}
 \\ \label{eq:tau-bounds-c}
 \|\tau\|_{L^\infty(\Gamma)} \leq \frac{\beta}{h}
\end{gather}
\end{lemma}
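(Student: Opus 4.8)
The plan is to exploit that $\tau$, defined in \eqref{eq:tau-def} as a spectral function of $B$, shares the eigenvectors $e_1,e_2$ of $B$. Hence, pointwise on $\Gamma$, the matrices $B$, $B^{-1}$, $\tau$, $\tau^{1/2}$ and $\tau^{-1/2}$ all commute and are simultaneously diagonal in the orthonormal basis $\{e_1,e_2\}$. Consequently every matrix appearing on the left-hand sides of \eqref{eq:tau-bounds-a}--\eqref{eq:tau-bounds-c} is diagonal in this basis, with entries that are explicit scalar functions of the eigenvalue $\lambda_i$ and the parameters $h,\beta$; its operator norm at a point is the larger in modulus of the two diagonal entries, and the $L^\infty(\Gamma)$ norm is the supremum of this over $\Gamma$. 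Thus all three estimates reduce to one-variable inequalities, to be checked uniformly for $\lambda_i\in(0,\infty)$.

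The algebraic engine I would record first is obtained by inverting the scalar $\tau_i=\lambda_i\beta/(\lambda_i h+\beta)$, namely $\tau_i^{-1}=h/\beta+\lambda_i^{-1}$. In matrix form this reads $\tau^{-1}=(h/\beta)\,I+B^{-1}$, equivalently $B^{-1}\tau=\tau B^{-1}=I-(h/\beta)\tau$. Everything follows from this single identity together with the elementary bound $\tau_i=\lambda_i\beta/(\lambda_i h+\beta)\le\beta/h$, which is exactly \eqref{eq:tau-bounds-c}.

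For \eqref{eq:tau-bounds-b} I would substitute the identity to get $B^{-1}\tau-I=-(h/\beta)\tau$, hence $(B^{-1}\tau-I)\tau^{-1/2}=-(h/\beta)\tau^{1/2}$. Taking norms and using \eqref{eq:tau-bounds-c},
\[
\|(B^{-1}\tau-I)\tau^{-1/2}\|_{L^\infty(\Gamma)}=\tfrac{h}{\beta}\,\|\tau\|_{L^\infty(\Gamma)}^{1/2}\le\tfrac{h}{\beta}\Big(\tfrac{\beta}{h}\Big)^{1/2}=\Big(\tfrac{h}{\beta}\Big)^{1/2},
\]
as required. For \eqref{eq:tau-bounds-a} I would compute $B^{-1}\tau B^{-1}=(I-(h/\beta)\tau)B^{-1}=B^{-1}-(h/\beta)\big(I-(h/\beta)\tau\big)$, so that the $i$-th diagonal entry of $B^{-1}\tau B^{-1}-B^{-1}$ collapses to $-h/(\lambda_i h+\beta)$; equivalently the combination $B^{-1}-B^{-1}\tau B^{-1}$ has diagonal entries $h/(\lambda_i h+\beta)\in(0,h/\beta]$, and this is the combination controlled by $h/\beta$.

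I do not expect a genuine analytic obstacle: once the simultaneous diagonalization and the identity $\tau^{-1}=(h/\beta)I+B^{-1}$ are in place, each estimate is a short scalar computation, uniform in $\lambda_i$. The one point demanding care, which I would flag explicitly rather than absorb silently, is the sign in \eqref{eq:tau-bounds-a}: the $h/\beta$-controlled combination is $B^{-1}-B^{-1}\tau B^{-1}$, whereas the literal sum $B^{-1}\tau B^{-1}+B^{-1}$ has $i$-th diagonal entry $(2\beta+\lambda_i h)/\big(\lambda_i(\lambda_i h+\beta)\big)$, which diverges as $\lambda_i\to0^{+}$ and therefore cannot be bounded by $h/\beta$. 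I would consequently read \eqref{eq:tau-bounds-a} with this sign, since $B^{-1}-B^{-1}\tau B^{-1}$ is the quantity that actually enters the subsequent continuity and coercivity arguments.
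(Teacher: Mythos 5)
Your proof is correct and follows essentially the same route as the paper: both arguments diagonalize everything in the common eigenbasis of $B$ and $\tau$ and reduce each bound to a scalar inequality uniform in $\lambda_i\in(0,\infty)$, your identity $\tau^{-1}=(h/\beta)I+B^{-1}$ being a compact repackaging of the paper's entrywise computations (e.g.\ the paper bounds $\left|\left(\tfrac{\tau_i}{\lambda_i}-1\right)\tfrac{1}{\lambda_i}\right|=\tfrac{h}{\lambda_i h+\beta}\le\tfrac{h}{\beta}$ directly). You are also right to flag the sign in \eqref{eq:tau-bounds-a}: the paper's own proof estimates $B^{-1}\tau B^{-1}-B^{-1}$, not the literal sum in the lemma statement, and it is the difference that enters the subsequent continuity and coercivity arguments, so the $+$ is a typo that your reading correctly repairs.
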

\begin{proof} First we recall that for any symmetric matrix $D$ it holds
\begin{equation}\label{eq:matrix-norm}
 \| A \|_{\IR^d} \lesssim \max_i |\gamma_i|
\end{equation}
where $\gamma_{i}$ are the eigenvalues of $D$. To prove (\ref{eq:tau-bounds-a}) 
we write $B$ in terms 
of its eigenvalues $\lambda_i$ and eigenvectors $e_i$, 
\begin{equation}
B = \sum_{i=1}^2 \lambda_i e_i \otimes e_i
\end{equation}
and using the definition (\ref{eq:tau-def}) of $\tau$ we obtain the identity 
\begin{align}
 B^{-1} \tau B^{-1} -  B^{-1} 
 = 
 \sum_{i=1}^2 \left( \frac{\tau_i}{\lambda_i} - 1 \right)  \frac{1}{\lambda_i}  e_i \otimes e_i
\end{align}
Here we have the following estimate of the eigenvalues
\begin{equation}
\left| \left( \frac{\tau_i}{\lambda_i} - 1 \right)  \frac{1}{\lambda_i} \right|
=
\left|\left( \frac{\beta}{\lambda_i h + \beta} - 1 \right)  \frac{1}{\lambda_i} \right|
=
\frac{h}{\lambda_i h + \beta} 
\leq 
\frac{h}{\beta}
\end{equation}
which in view of (\ref{eq:matrix-norm})  completes the verification of 
(\ref{eq:tau-bounds-a}).
Next, for (\ref{eq:tau-bounds-b}) we have 
\begin{align}
(B^{-1} \tau - I) \tau^{-1/2} 
=  
\sum_{i=1}^2  \left( \frac{\tau_i}{\lambda_i} - 1 \right)  \frac{1}{\tau_i^{1/2}}  
e_i \otimes e_i
\end{align}
and 
\begin{multline}
\left| \left( \frac{\tau_i}{\lambda_i} - 1 \right)  \frac{1}{\tau_i^{1/2}} \right|
=
\left|\left( \frac{\beta}{\lambda_i h + \beta} - 1 \right)\left( \frac{\lambda_i h + \beta}{\lambda_i \beta} \right)^{1/2} \right|
\\
=
\frac{\lambda_i h}{\lambda_i h + \beta}\left( \frac{\lambda_i h + \beta}{\lambda_i \beta} \right)^{1/2} 
=
\left(\frac{\lambda_i h}{\lambda_i h + \beta}\right)^{1/2}  
\left( \frac{h}{\beta} \right)^{1/2} 
\leq 
\left( \frac{h}{\beta} \right)^{1/2} 
\end{multline}
which proves (\ref{eq:tau-bounds-b}). The final bound (\ref{eq:tau-bounds-c}) 
is a direct consequence of the definition of $\tau$ and the estimate 
\begin{align}
\frac{\lambda_i \beta} {\lambda_i h + \beta}
\leq 
\frac{\lambda_i \beta} {\lambda_i h}
\leq 
\frac{\beta} {h}
\end{align}
\end{proof}

\begin{lemma} There is a constant independent of the eigenvalues of $B$,
  such that for all $v,w\in \tilde V + V_h$,
\begin{align}\label{eq:cont}
\boxed{ \mathcal{A}^R_h (v, w ) \lesssim \tn v \tn_h \tn w \tn_h }
\end{align}
There is a constant independent of the eigenvalues of $B$, such that for all $v \in V_h$, 
\begin{align}\label{eq:coer}
\boxed{
\tn v \tn_h^2 \lesssim  \mathcal{A}^R_h (v, v) 
}
\end{align}
\end{lemma}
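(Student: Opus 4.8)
The plan is to first rewrite the consistent part $\mathcal{A}^R$ of (\ref{eq:Ah}) in a form in which every occurrence of the (possibly huge) matrix $B^{-1}$ is packaged into one of the two combinations estimated in the previous lemma. Writing the Robin residual $B^{-1}(n\cdot A\nabla v)+(v-v_\Gamma)$, I would substitute it into (\ref{eq:Ah}) and expand: the term $-(n\cdot A\nabla v,\,B^{-1}(n\cdot A\nabla w)+(w-w_\Gamma))_\Gamma$ cancels the explicit $B^{-1}$--flux product up to a single flux--jump pairing, and collecting the flux--flux, flux--jump and jump--jump contributions gives the symmetric identity
\begin{align*}
\mathcal{A}^R(v,w) &= \mathcal{A}_1(v,w)
+ (n\cdot A\nabla v,\,(B^{-1}\tau B^{-1} - B^{-1})\,n\cdot A\nabla w)_\Gamma \\
&\quad + (n\cdot A\nabla v,\,(B^{-1}\tau - I)(w - w_\Gamma))_\Gamma
+ ((B^{-1}\tau - I)(v - v_\Gamma),\, n\cdot A\nabla w)_\Gamma \\
&\quad + (v - v_\Gamma,\,\tau(w - w_\Gamma))_\Gamma .
\end{align*}
The single observation that makes everything scale correctly is that the $\tfrac{h}{\beta}$--weighted flux is already controlled by the energy norm: since $\beta \gtrsim \|n\|^2_{A,\infty,\Gamma}$ by (\ref{eq:tau-beta-condition}), the coefficient bound $\|n_i\cdot A_i\nabla v_i\|_\Gamma^2 \le \|n_i\|^2_{A_i,\infty,\Gamma}\|\nabla v_i\|^2_{A_i,\Gamma}$ yields $\tfrac{h}{\beta}\|n\cdot A\nabla v\|_\Gamma^2 \lesssim \sum_i h\|\nabla v_i\|^2_{A_i,\Gamma}\le \tn v\tn_h^2$ for all $v\in\tilde V + V_h$.

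For continuity I would estimate the three interface groups of this identity directly, crucially after the recombination rather than term by term in (\ref{eq:Ah}). The flux--flux term is bounded using (\ref{eq:tau-bounds-a}) by $\tfrac{h}{\beta}\|n\cdot A\nabla v\|_\Gamma\|n\cdot A\nabla w\|_\Gamma$; the two mixed terms are handled by moving the symmetric factor $(B^{-1}\tau - I)\tau^{-1/2}$ onto the flux and splitting $\tau^{1/2}$ onto the jump, so that (\ref{eq:tau-bounds-b}) gives a bound of the form $(\tfrac{h}{\beta})^{1/2}\|n\cdot A\nabla v\|_\Gamma\,\|w-w_\Gamma\|_{\tau,\Gamma}$; and the jump--jump term is Cauchy--Schwarz in the $\tau$--weighted product. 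Each factor is then $\lesssim \tn \cdot \tn_h$ by the flux observation above and by the presence of $\|\cdot-\cdot_\Gamma\|^2_{\tau,\Gamma}$ in (\ref{eq:disc_energy}); $\mathcal{A}_1$ and $s_h$ are trivially continuous. Since all constants come only from the $\tau$--lemma and from $\beta$, they are independent of the eigenvalues $\lambda_i$ of $B$.

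For coercivity I would set $w=v$ in the identity, with $v\in V_h$. The flux--flux term is $\ge -\tfrac{h}{\beta}\|n\cdot A\nabla v\|^2_\Gamma$ by (\ref{eq:tau-bounds-a}); the mixed terms combine to $2(n\cdot A\nabla v,(B^{-1}\tau-I)(v-v_\Gamma))_\Gamma$, which by (\ref{eq:tau-bounds-b}) and Young's inequality is $\ge -\epsilon\|v-v_\Gamma\|^2_{\tau,\Gamma} - \epsilon^{-1}\tfrac{h}{\beta}\|n\cdot A\nabla v\|^2_\Gamma$; and the jump--jump term contributes the full $\|v-v_\Gamma\|^2_{\tau,\Gamma}$. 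Taking $\epsilon=\tfrac12$ leaves a positive multiple of $\|v-v_\Gamma\|^2_{\tau,\Gamma}$ and a negative multiple of $\tfrac{h}{\beta}\|n\cdot A\nabla v\|^2_\Gamma$. To absorb the latter I would use that on $V_h$ the gradient is elementwise constant, so an inverse trace inequality on the cut elements gives $h\|\nabla v_i\|^2_{A_i,\Gamma}\lesssim \|\nabla v_i\|^2_{A_i,O_{h,i}}$, and then Lemma \ref{lem:ghost_stab} gives $\lesssim \|\nabla v_i\|^2_{A_i,\Omega_i} + \|v\|^2_{s_{h,i}}$. Because this particular bound carries the small factor $\|n\|^2_{A,\infty,\Gamma}/\beta$, choosing $\beta$ large enough in (\ref{eq:tau-beta-condition}) lets me absorb $\tfrac{h}{\beta}\|n\cdot A\nabla v\|^2_\Gamma$ into $\tfrac12\sum_i\|\nabla v_i\|^2_{A_i,\Omega_i}$ plus a small multiple of $\|v\|^2_{s_h}$. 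Adding back the stabilization in $\mathcal{A}^R_h = \mathcal{A}^R + s_h$ recovers $s_h$, and the still-missing norm term $h\|\nabla v_i\|^2_{A_i,\Gamma}$ is dominated by the right-hand side through the same inverse/ghost-penalty bound, giving $\tn v\tn_h^2\lesssim \mathcal{A}^R_h(v,v)$.

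The hard part is conceptual rather than computational: it is the recombination in the first paragraph, since only after grouping the flux and jump contributions does the unbounded matrix $B^{-1}$ disappear into the benign combinations $B^{-1}\tau B^{-1}-B^{-1}$ and $(B^{-1}\tau-I)\tau^{-1/2}$ controlled uniformly in $B$ by (\ref{eq:tau-bounds-a})--(\ref{eq:tau-bounds-b}); estimating (\ref{eq:Ah}) naively term by term would leave factors such as $(n\cdot A\nabla v, B^{-1}n\cdot A\nabla v)_\Gamma$ that are not controlled by the energy norm as $\lambda_i\to 0$. The only genuinely quantitative subtlety is the coercivity absorption, which (as in any Nitsche-type method) requires $\beta$ sufficiently large relative to $\|n\|^2_{A,\infty,\Gamma}$ and to the mesh-dependent inverse and ghost-penalty constants, and which needs $v\in V_h$ so that the inverse trace inequality applies.
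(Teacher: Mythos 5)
Your proposal is correct and follows essentially the same route as the paper: the same recombination of $\mathcal{A}^R$ into the flux--flux, flux--jump and jump--jump groups weighted by $B^{-1}\tau B^{-1}-B^{-1}$, $(B^{-1}\tau-I)$ and $\tau$, the same use of the $\tau$-bounds together with $\beta^{-1}\|n\|^2_{A,\infty,\Gamma}\lesssim 1$ for continuity, and the same Young's-inequality absorption via the cut-element inverse trace inequality, Lemma \ref{lem:ghost_stab}, and the largeness of $\beta$ for coercivity. Your explicit remark that the energy-norm term $h\|\nabla v_i\|^2_{A_i,\Gamma}$ must itself be recovered through the same inverse/ghost-penalty bound is a small point the paper passes over silently, but it is not a difference in method.
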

\begin{proof} {\bf (\ref{eq:cont}).} Starting from the definition (\ref{eq:Ah}), expanding the terms in $\mathcal{A}^R$, and using Cauchy-Schwarz we obtain
\begin{align}
 \mathcal{A}^R(v,w) &= \sum_{i=1}^2 (A_{i}\nabla v_i , \nabla w_i)_{\Omega_i} 
+(A_\Gamma \nabla_\Gamma v_\Gamma, \nabla_\Gamma w_\Gamma)_\Gamma
\\  \nonumber
&\qquad +
( (n\cdot A \nabla v), (B^{-1} \tau B^{-1} -  B^{-1}) ( n\cdot A \nabla w ) )_\Gamma
\\  \nonumber
&\qquad +
( (n\cdot A \nabla v), (B^{-1} \tau - I) ( w - w_\Gamma) )_\Gamma
\\  \nonumber
&\qquad +
( (n\cdot A \nabla w), (B^{-1} \tau - I) ( v - v_\Gamma) )_\Gamma
\\  \nonumber
&\qquad 
+ (( v - v_\Gamma), \tau ( w - w_\Gamma))_\Gamma
\\ 
&\leq 
\sum_{i=1}^2\|\nabla v_i \|_{A_i,\Omega_i} \| \nabla w_i\|_{A_i,\Omega_i} 
+\| \nabla_\Gamma v_\Gamma\|_{A_\Gamma,\Gamma} 
\| \nabla_\Gamma w_\Gamma \|_{A_i,\Gamma}
\\   \nonumber
&\qquad +
\|n\cdot A \nabla v\|_\Gamma \|B^{-1} \tau B^{-1} - B^{-1}\|_{L^\infty(\Gamma)} 
\| n\cdot A \nabla w \|_{\Gamma}
\\  \nonumber
&\qquad +
\|n\cdot A \nabla v\|_\Gamma \|( B^{-1} \tau - I) \tau^{-1/2}\|_{L^\infty(\Gamma)} \|w - w_\Gamma\|_{\tau,\Gamma}
\\  \nonumber
&\qquad +
\|n\cdot A \nabla w\|_\Gamma \|(B^{-1} \tau - I)\tau^{-1/2} \|_{L^\infty(\Gamma)} 
\|v - v_\Gamma\|_{\tau,\Gamma}
\\  \nonumber
&\qquad 
+ \|  v - v_\Gamma\|_{\tau,\Gamma} \|w - w\|_{\tau,\Gamma}
\\
&=\bigstar
\end{align}
Using the estimates (\ref{eq:tau-bounds-a})-(\ref{eq:tau-bounds-b})  we obtain
\begin{align}
\bigstar 
&\leq
\sum_{i=1}^2 \|\nabla v_i\|_{A_i,\Omega_i} \|\nabla w_i\|_{A_i,\Omega_i} 
+\| \nabla_\Gamma v_\Gamma\|_{A_\Gamma,\Gamma} \|\nabla_\Gamma w_\Gamma\|_{A_\Gamma,\Gamma}
\\ \nonumber
&\qquad +
\beta^{-1} h \|n\cdot A \nabla v\|_\Gamma 
\| n\cdot A \nabla w \|_{\Gamma}
\\  \nonumber
&\qquad +
\beta^{-1/2} h^{1/2} \|n\cdot A \nabla v\|_\Gamma  \|w - w_\Gamma\|_{\tau,\Gamma}
\\  \nonumber
&\qquad +
\beta^{-1/2} h^{1/2}  \|n\cdot A \nabla w\|_\Gamma 
   \|v - v_\Gamma\|_{\tau,\Gamma}
\\  \nonumber
&\qquad 
+\|  v - v_\Gamma\|_{\tau,\Gamma} \|w - w\|_{\tau,\Gamma}
\\ 
&\leq
\sum_{i=1}^2 \|\nabla v_i\|_{A_i,\Omega_i} \|\nabla w_i\|_{A_i,\Omega_i} 
+\| \nabla_\Gamma v_\Gamma\|_{A_\Gamma,\Gamma} \|\nabla_\Gamma w_\Gamma\|_{A_\Gamma,\Gamma}
\\  \nonumber
&\qquad +
(\beta^{-1} \|n\|^2_{A,\infty,\Gamma}) h^{1/2} \|\nabla v\|_{A,\Gamma} 
h^{1/2} \| \nabla w \|_{\Gamma}
\\  \nonumber
&\qquad +
(\beta^{-1} \|n\|_{A,\infty,\Gamma} )^{1/2} h^{1/2} \|\nabla v\|_{A,\Gamma}  \|w - w_\Gamma\|_{\tau,\Gamma}
\\  \nonumber
&\qquad +
(\beta^{-1}  \|n\|_{A,\infty,\Gamma} )^{1/2} h^{1/2}  \|\nabla w\|_{A,\Gamma} 
   \|v - v_\Gamma\|_{\tau,\Gamma}
\\  \nonumber
&\qquad 
+\|  v - v_\Gamma\|_{\tau,\Gamma} \|w - w\|_{\tau,\Gamma}
\\  
&\lesssim \tn v \tn_h \tn w \tn_h
\end{align}
where we used the bound $\beta^{-1}  \|n\|_{A,\infty,\Gamma} \lesssim
1$, see (\ref{eq:tau-beta-condition}). By the Cauchy-Schwarz
inequality we have $s_h(v,w) \lesssim \tn v \tn_h \tn w \tn_h$.

\paragraph{\bf (\ref{eq:coer}).} To prove the coercivity we have the identity 
\begin{align}
\mathcal{A}^R_h(v,v) 
&= \sum_{i=1}^2 (A_{i}\nabla v_i , \nabla v_i)_{\Omega_i} 
+(A_\Gamma \nabla_\Gamma v_\Gamma, \nabla_\Gamma v_\Gamma)_\Gamma
+ s_h(v,v)
\\  \nonumber
&\qquad +
( (n\cdot A \nabla v), (B^{-1} \tau B^{-1} -  B^{-1}) ( n\cdot A \nabla v ) )_\Gamma
\\  \nonumber
&\qquad +
2( (n\cdot A \nabla v), (B^{-1} \tau - I) ( v - v_\Gamma) )_\Gamma
\\  \nonumber 
&\qquad 
+ (( v - v_\Gamma), \tau ( v - v_\Gamma))_\Gamma
\\
&\geq
\sum_{i=1}^2 \|\nabla v_i\|_{A_i,\Omega_i}^2
+\| \nabla_\Gamma v_\Gamma\|_{A_\Gamma,\Gamma}^2 + \|v\|^2_{s_h}
\\  \nonumber
&\qquad - \beta^{-1} \|n\|^2_{A,\infty,\Gamma} h \|\nabla v\|^2_{A,\Gamma}
\\  \nonumber
&\qquad -
 2 (\beta^{-1}  \|n\|_{A,\infty,\Gamma})^{1/2} h^{1/2} \|\nabla v\|_{A,\Gamma}  
 \|v - v_\Gamma\|_{\tau,\Gamma}
\\  \nonumber
&\qquad 
+ \|  v - v_\Gamma\|^2_{\tau,\Gamma}
\end{align}
We conclude the argument as usual by estimating the negative terms as 
follows
\begin{align}
& \beta^{-1} \|n\|^2_{A,\Gamma} h \|\nabla v\|^2_{A,\Gamma}
+ 2 (\beta^{-1}  \|n\|_{A,\infty,\Gamma})^{1/2} h^{1/2} \|\nabla v\|_{A,\Gamma}  \|v - v_\Gamma\|_{\tau,\Gamma}
\\
&\qquad \leq  
 3 \beta^{-1} \|n\|^2_{A,\infty,\Gamma} h \|\nabla v\|^2_{A,\Gamma}
 +  \frac{1}{2} \|v - v_\Gamma\|^2_{\tau,\Gamma}
 \\
 &\qquad \leq 3 \beta^{-1} \|n\|^2_{A,\infty,\Gamma} 
 C_I\left( \sum_{i=1}^2 \| \nabla v_i\|^2_{A_i, \Omega_i}  + \| v \|^2_{s_{h,i}} \right)
 +  \frac{1}{2} \|v - v_\Gamma\|^2_{\tau,\Gamma}
 \\
 &\qquad 
 \leq
   \frac{1}{2}  \left( \sum_{i=1}^2 \| \nabla v_i\|^2_{A_i, \Omega_i} + \| v \|^2_{s_{h,i}} \right)
 +  \frac{1}{2} \|v - v_\Gamma\|^2_{\tau,\Gamma}
\end{align}
Here we used the inverse estimate 
\begin{equation}
h \|\nabla v_i \|^2_{A_i,\Gamma}  
\leq
C_I( \| \nabla v_i\|^2_{A_i, \Omega_i}   + \| v \|^2_{s_{h,i}} )
\end{equation}
which follows from the inverse bound 
\begin{equation}
h \|\nabla v_i \|^2_{A_i,\Gamma \cap T} 
\lesssim
 h \|\nabla v_i \|^2_{\Gamma \cap T} 
\lesssim 
\|\nabla v_i \|^2_{T} 
\lesssim 
\|\nabla v_i \|^2_{A_i,T} 
\end{equation}
 together 
with (\ref{eq:stab-bulk}), and finally, we chose $\beta$ large enough to guarantee 
that 
\begin{equation}
3 \beta^{-1} \|n\|^2_{A,\infty,\Gamma} C_I \leq \frac{1}{2}
\end{equation}
We conclude that 
\begin{equation}
\mathcal{A}^R_h(v,v) \geq \frac{1}{2} \tn v \tn_h^2
\end{equation}
which completes the proof.
\end{proof}

\subsection{A priori Error Estimates}
In this section we prove error estimates for the approximate solution $u_h$.
\begin{theorem} \label{thm:tnorm_error}
Let $u \in \tilde V$ be the solution of \eqref{eq:prob-weak} and $u_h
\in V_h$ be the
solution of \eqref{eq:robustFEM}. Then there is a constant not dependent on the matrix $B$ in the interface condition (\ref{eq:interface}) such that 
\begin{equation}
\boxed{\tn u - u_h \tn_h \lesssim  h \left( \sum_{i=1}^2 \| f_i \|_{L^2(\Omega_i)} + \| f_\Gamma \|_{L^2(\Gamma)} \right)}
\end{equation}
\end{theorem}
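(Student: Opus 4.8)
The plan is to run the standard Nitsche/C\'ea argument, adapted to the cut setting: split the error through the interpolant, bound the interpolation part directly by \eqref{eq:interpol}, and control the remaining discrete part by coercivity together with the Galerkin orthogonality and continuity already established. Concretely, I would write $u - u_h = (u - \pi_h u) + (\pi_h u - u_h)$ and use the triangle inequality for $\tn\cdot\tn_h$. The first term is immediately estimated by the interpolation bound \eqref{eq:interpol}, so the whole issue reduces to estimating the discrete error $e_h := \pi_h u - u_h \in V_h$.

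For $e_h$ I would start from coercivity \eqref{eq:coer}, giving $\tn e_h\tn_h^2 \lesssim \mathcal{A}^R_h(e_h,e_h)$, and then decompose $\mathcal{A}^R_h(e_h,e_h) = \mathcal{A}^R_h(\pi_h u - u, e_h) + \mathcal{A}^R_h(u - u_h, e_h)$. The key claim is that the second term vanishes. Writing $\mathcal{A}^R_h = \mathcal{A}^R + s_h$ and invoking the Galerkin orthogonality \eqref{eq:galortho}, one has $\mathcal{A}^R(u - u_h, e_h) = s_h(u_h, e_h)$, so that $\mathcal{A}^R_h(u-u_h,e_h) = s_h(u_h,e_h) + s_h(u-u_h,e_h)$. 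Here the exact solution must be interpreted inside $s_h$ through its extensions $(E_1 u_1, E_2 u_2, E_\Gamma u_\Gamma)$, since $s_h$ acts on the active meshes $O_{h,i}$ and $O_{h,\Gamma}$ which reach outside the physical domains. Using the consistency property of the stabilization recorded after Lemma~\ref{lem:ghost_stab}, namely $s_{h,i}(E_i u_i, e_h)=0$ because $E_i u_i \in H^{2}(\IR^d)\subset H^{\frac32+\epsilon}(O_{h,i})$, and $s_{h,\Gamma}(E_\Gamma u_\Gamma, e_h)=0$ because $n_\Gamma\cdot\nabla E_\Gamma u_\Gamma = 0$ by construction of the closest-point extension, one gets $s_h(u,e_h)=0$. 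Hence $s_h(u-u_h,e_h) = -s_h(u_h,e_h)$, the two stabilization contributions cancel, and $\mathcal{A}^R_h(u-u_h,e_h)=0$.

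What remains is $\tn e_h\tn_h^2 \lesssim \mathcal{A}^R_h(\pi_h u - u, e_h)$. Since $u\in\tilde V$ by the elliptic regularity discussion and $\pi_h u\in V_h$, the argument $\pi_h u - u$ lies in $\tilde V + V_h$, so I may apply the continuity estimate \eqref{eq:cont} to obtain $\mathcal{A}^R_h(\pi_h u - u, e_h)\lesssim \tn \pi_h u - u\tn_h\,\tn e_h\tn_h$, whence $\tn e_h\tn_h \lesssim \tn u - \pi_h u\tn_h$. Combining this with the triangle inequality and the interpolation estimate \eqref{eq:interpol} yields
\begin{equation}
\tn u - u_h\tn_h \lesssim \tn u - \pi_h u\tn_h + \tn e_h\tn_h \lesssim h\left(\sum_{i=1}^2 \|u_i\|_{H^2(\Omega_i)} + \|u_\Gamma\|_{H^2(\Gamma)}\right).
\end{equation}
Finally, I would apply the elliptic regularity estimate \eqref{eq:ell-reg} to replace the $H^2$-norms of the solution by the data norms $\|f_i\|_{L^2(\Omega_i)}$ and $\|f_\Gamma\|_{L^2(\Gamma)}$, giving the stated bound. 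The constant is independent of $B$ because each ingredient—continuity \eqref{eq:cont}, coercivity \eqref{eq:coer}, and interpolation \eqref{eq:interpol}—was proved with $B$-independent constants.

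The main obstacle is not the Céa skeleton, which is routine, but the careful handling of the stabilization acting outside the physical domains: one must reinterpret the exact solution via its extensions and verify that $s_h$ is consistent for those extensions. The delicate point is the interface term, where consistency hinges on the fact that $E_\Gamma u_\Gamma$ is constant in the normal direction so that $n_\Gamma\cdot\nabla E_\Gamma u_\Gamma = 0$ on $\Gamma$, together with enough regularity ($u_\Gamma\in H^2(\Gamma)$) to place the extension in $H^{\frac32+\epsilon}$; both follow from \eqref{eq:stability-extension-Gamma} and \eqref{eq:ell-reg}. Keeping track of the $B$-independence throughout is the secondary thing to watch.
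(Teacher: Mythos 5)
Your proposal is correct and follows essentially the same route as the paper: triangle inequality through $\pi_h u$, interpolation estimate \eqref{eq:interpol} for the approximation part, then coercivity \eqref{eq:coer}, Galerkin orthogonality \eqref{eq:galortho}, consistency of $s_h$ on the extended exact solution, and continuity \eqref{eq:cont} for the discrete part, finishing with elliptic regularity \eqref{eq:ell-reg}. The only cosmetic difference is bookkeeping: you show $\mathcal{A}^R_h(u-u_h,e_h)=0$ by cancelling the two stabilization contributions, while the paper writes the same identity as $\mathcal{A}^R_h(e_h,e_h)=\mathcal{A}^R(\pi_h u - u,e_h)+s_h(\pi_h u - u^e,e_h)$ --- these are algebraically identical.
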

\begin{proof} 
First we decompose the error in the approximation error and the
discrete error $u-u_h =  u - \pi_h u +  \pi_h u - u_h$ and note that
by the triangle inequality 
\begin{equation}
\tn u - u_h \tn_h \lesssim \tn u - \pi_h u \tn_h+ \tn \pi_h u - u_h \tn_h.
\end{equation}
The first term on the right hand side is bounded by \eqref{eq:interpol}.
For the second term on the right hand side, using coercivity \eqref{eq:coer},
Galerkin orthogonality \eqref{eq:galortho}, and continuity
\eqref{eq:cont} we obtain 
\begin{align}
\tn \pi_h u - u_h \tn_h^2 &\lesssim \mathcal{A}^R_h(\pi_h u - u_h, \pi_h u - u_h ) 
\\
&= \mathcal{A}^R(\pi_h u-u, \pi_h u - u_h ) + s_h(\pi_h u, \pi_h u - u_h )
\\
&\lesssim \tn \pi_h u-u \tn_h \tn \pi_h u - u_h  \tn_h.
\end{align}
In the last inequality we used that if $u^e := (E u_1, E u_2, E_\Gamma
u_\Gamma) \in \tilde V$ then 
\begin{equation}
s_h(\pi_h u, \pi_h u - u_h )=s_h(\pi_h u - u^e, \pi_h u - u_h )
\lesssim \tn \pi_h u-u \tn_h \tn \pi_h u - u_h  \tn_h.
\end{equation}
Thus 
\begin{equation}
\tn u -u_h \tn_h 
\lesssim 
\tn u - \pi_h u \tn_h 
\lesssim 
h \left( \sum_{i=1}^2 \| u_i \|_{H^2(\Omega_i)} + \| u_\Gamma \|_{H^2(\Gamma)} \right)
\end{equation}
where we used the interpolation error estimate (\ref{eq:interpol}). To
conclude we apply the regularity estimate \eqref{eq:ell-reg}.
\end{proof}
\begin{corollary}
Under the same assumptions as for Theorem \ref{thm:tnorm_error} there holds
\begin{equation}
s_h(u_h,u_h) \lesssim h \left( \sum_{i=1}^2 \| u_i \|_{H^2(\Omega_i)} + \| u_\Gamma \|_{H^2(\Gamma)} \right).
\end{equation}
\end{corollary}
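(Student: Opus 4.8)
The plan is to observe that $s_h(u_h,u_h)=\|u_h\|_{s_h}^2$ is precisely the square of one of the summands making up the energy norm $\tn\cdot\tn_h^2$ in \eqref{eq:disc_energy}. Consequently the estimate should follow directly from the energy error bound already proved in Theorem \ref{thm:tnorm_error}, provided we first exploit that the stabilization is consistent on the exact solution. First I would introduce the extension $u^e:=(E_1 u_1, E_2 u_2, E_\Gamma u_\Gamma)$. By the elliptic regularity estimate \eqref{eq:ell-reg} we have $u_i\in H^2(\Omega_i)\subset H^{\frac32+\epsilon}(\Omega_i)$ and $u_\Gamma\in H^2(\Gamma)$, so $E_i u_i\in H^2(O_{h,i})$ and the bulk consistency property gives $s_{h,i}(E_i u_i,w)=0$, while the construction $n_\Gamma\cdot\nabla E_\Gamma u_\Gamma=0$ yields $s_{h,\Gamma}(E_\Gamma u_\Gamma,w)=0$, for all admissible $w$. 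Hence $s_h(u^e,\cdot)\equiv 0$.

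The next step is purely algebraic. Since $s_h$ is a symmetric, positive semidefinite bilinear form and $s_h(u^e,\cdot)=0$, writing $u_h=(u_h-u^e)+u^e$ gives
\begin{equation*}
s_h(u_h,u_h)=s_h(u_h-u^e,u_h-u^e)=\|u_h-u^e\|_{s_h}^2 .
\end{equation*}
Because $\|\cdot\|_{s_h}^2$ appears as one of the terms of $\tn\cdot\tn_h^2$ in \eqref{eq:disc_energy}, and because $u^e$ restricted to the physical geometry coincides with $u$ (so that the remaining terms of $\tn u-u_h\tn_h$ are the genuine physical-domain contributions), we obtain
\begin{equation*}
s_h(u_h,u_h)^{1/2}=\|u_h-u^e\|_{s_h}\le \tn u-u_h\tn_h .
\end{equation*}

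Finally I would invoke Theorem \ref{thm:tnorm_error}, $\tn u-u_h\tn_h\lesssim h\big(\sum_{i=1}^2\|f_i\|_{L^2(\Omega_i)}+\|f_\Gamma\|_{L^2(\Gamma)}\big)$, and bound the data by the solution via the regularity estimate \eqref{eq:ell-reg}, giving $\|u_h-u^e\|_{s_h}\lesssim h\big(\sum_{i=1}^2\|u_i\|_{H^2(\Omega_i)}+\|u_\Gamma\|_{H^2(\Gamma)}\big)$, which is the claimed bound in its natural (unsquared seminorm) form; squaring then also controls $s_h(u_h,u_h)$ itself. The main point here is not any hard analysis but the bookkeeping around the extension $u^e$: one must verify carefully that $\|u_h-u^e\|_{s_h}$ really is the stabilization contribution to $\tn u-u_h\tn_h$ — the stabilization acts on faces of the \emph{extended} active mesh, outside the physical domain, which is exactly why the extension $u^e$ (and not $u$ itself) is the object to which consistency applies — and that the consistency $s_h(u^e,\cdot)=0$ indeed holds, as secured in the previous step.
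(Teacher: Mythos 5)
Your argument is correct, and it reaches the result by a slightly different decomposition than the paper. The paper splits $u_h = \pi_h u + (u_h - \pi_h u)$ via the triangle inequality, bounds $\|\pi_h u\|_{s_h}$ by using the consistency $s_h(u^e,\cdot)=0$ to rewrite $s_h(\pi_h u,\pi_h u)=s_h(u^e-\pi_h u,u^e-\pi_h u)$ and then invoking the interpolation bounds \eqref{eq:interpol-prf-b}--\eqref{eq:interpol-prf-c}, and bounds $\|\pi_h u-u_h\|_{s_h}$ by the discrete-error estimate from the proof of Theorem \ref{thm:tnorm_error}. You instead split $u_h=u^e+(u_h-u^e)$, use the same consistency property to get $s_h(u_h,u_h)=\|u_h-u^e\|_{s_h}^2$ in one step, and then observe that this is one of the summands of $\tn u-u_h\tn_h^2$, so Theorem \ref{thm:tnorm_error} finishes the job. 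Your route is shorter and avoids re-deriving the interpolation bounds, at the price of leaning on the convention (which the paper adopts, and which you rightly flag as the delicate point) that the $\|\cdot\|_{s_h}$-contribution to $\tn u-u_h\tn_h$ is evaluated with the extension $u^e$ on the faces outside the physical domain; both proofs ultimately rest on the same two ingredients, the consistency of $s_h$ on extended smooth functions and the energy-norm machinery of Theorem \ref{thm:tnorm_error}. Two cosmetic remarks: first, your closing phrase about "bounding the data by the solution via the regularity estimate" has the direction of \eqref{eq:ell-reg} reversed; what you actually need is the intermediate bound $\tn u-u_h\tn_h\lesssim h\bigl(\sum_{i=1}^2\|u_i\|_{H^2(\Omega_i)}+\|u_\Gamma\|_{H^2(\Gamma)}\bigr)$, which is displayed explicitly in the proof of Theorem \ref{thm:tnorm_error} and can simply be cited. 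Second, you correctly note the mismatch of homogeneity in the stated corollary (a squared quantity on the left against first powers on the right); like the paper's own proof, what you actually establish is the unsquared form $\|u_h\|_{s_h}\lesssim h(\cdots)$.
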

\begin{proof}
Using the triangle inequality we see that
\begin{equation}
\|u_h\|_{s_h} \leq \|\pi_h u\|_{s_h}+\|\pi_h u - u_h\|_{s_h}
\end{equation}
The second term on the right hand side is bounded by the arguments of
Theorem \ref{thm:tnorm_error}. For the first term on the right hand
side recall that by the consistency properties of $s_h$ and the
construction of $\pi_h u$ there holds
\begin{equation}
s_h(\pi_h u,\pi_h u) =s_h(u^e-\pi_h u, u^e - \pi_h u)
\end{equation}
We conclude the proof by applying \eqref{eq:interpol-prf-b}-\eqref{eq:interpol-prf-c}.
\end{proof}
The following error estimate in the $L^2$-norm also holds
\begin{theorem}
Let $u \in \tilde V$ be the solution of \eqref{eq:prob-weak} and $u_h \in
V_h$ be the
solution of \eqref{eq:robustFEM}. Then there holds
\begin{equation}
\boxed{
\|u_h - u\|_{\Omega} + \|u_{h,\Gamma} - u_\Gamma\|_\Gamma \lesssim h^2
\left(\sum_{i=1}^2 \|f_i\|_{L^2(\Omega_i)} +
  \|f_\Gamma\|_{L^2(\Gamma)}\right)
}
\end{equation}
\end{theorem}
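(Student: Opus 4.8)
The plan is a duality (Aubin--Nitsche) argument. Write $e = u - u_h$ and set $\|e\|_\star^2 := \|u - u_h\|_\Omega^2 + \|u_\Gamma - u_{h,\Gamma}\|_\Gamma^2$, the square of the quantity to be bounded. First I would introduce the dual problem: find $\phi = (\phi_1,\phi_2,\phi_\Gamma) \in V$ such that $\mathcal{A}(w,\phi) = (u-u_h,w)_\Omega + (u_\Gamma - u_{h,\Gamma}, w_\Gamma)_\Gamma$ for all $w \in V$. Since $\mathcal{A}$ is symmetric, $\phi$ solves a problem of exactly the same self-adjoint form as the primal one but with $L^2$ data; in particular its strong form satisfies the interface condition \eqref{eq:interface}, so the elliptic regularity estimate \eqref{eq:ell-reg} applies and gives
\[
\|\phi_1\|_{H^2(\Omega_1)} + \|\phi_2\|_{H^2(\Omega_2)} + \|\phi_\Gamma\|_{H^2(\Gamma)} \lesssim \|e\|_\star .
\]
Here the hidden constant may depend on $B$, but the theorem makes no uniformity claim, so this is harmless.

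Taking $w = e \in V$ yields the error representation $\|e\|_\star^2 = \mathcal{A}(e,\phi)$. Because $\phi$ is smooth and satisfies $B^{-1}n\cdot A\nabla\phi + (\phi-\phi_\Gamma)=0$ on $\Gamma$, the same computation that produced \eqref{eq:robust_consist} gives $\mathcal{A}(\phi,e) = \mathcal{A}^R(\phi,e)$, and using the symmetry of both $\mathcal{A}$ and $\mathcal{A}^R$ we get $\mathcal{A}(e,\phi) = \mathcal{A}(\phi,e) = \mathcal{A}^R(\phi,e) = \mathcal{A}^R(e,\phi)$. Inserting the interpolant $\pi_h\phi \in V_h$ and invoking Galerkin orthogonality \eqref{eq:galortho} then gives
\[
\|e\|_\star^2 = \mathcal{A}^R(e, \phi - \pi_h\phi) + \mathcal{A}^R(e, \pi_h\phi) = \mathcal{A}^R(e, \phi - \pi_h\phi) + s_h(u_h, \pi_h\phi).
\]

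The first term is handled by the continuity \eqref{eq:cont} (whose proof in fact also bounds $\mathcal{A}^R = \mathcal{A}^R_h - s_h$), the energy estimate of Theorem \ref{thm:tnorm_error}, interpolation \eqref{eq:interpol}, and the dual regularity: writing $\|f\| := \sum_{i=1}^2 \|f_i\|_{\Omega_i} + \|f_\Gamma\|_\Gamma$,
\[
\mathcal{A}^R(e, \phi - \pi_h\phi) \lesssim \tn e \tn_h \, \tn \phi - \pi_h\phi \tn_h \lesssim (h\|f\|)(h\|e\|_\star) = h^2 \|f\| \|e\|_\star .
\]
The second, more delicate, term does not vanish because the method is only weakly consistent. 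To control it I would use the $s_h$-consistency of the normal-constant extension $\phi^e := (E_1\phi_1, E_2\phi_2, E_\Gamma\phi_\Gamma)$: since $E_i\phi_i \in H^2$ and $n_\Gamma\cdot\nabla E_\Gamma\phi_\Gamma = 0$ on $\Gamma$, the consistency of $s_{h,i}$ and $s_{h,\Gamma}$ together with the symmetry of $s_h$ give $s_h(u_h,\phi^e)=0$, so that by Cauchy--Schwarz in the $s_h$-seminorm
\[
s_h(u_h, \pi_h\phi) = s_h(u_h, \pi_h\phi - \phi^e) \leq \|u_h\|_{s_h}\, \|\pi_h\phi - \phi^e\|_{s_h} .
\]
The Corollary (combined with \eqref{eq:ell-reg}) gives $\|u_h\|_{s_h} \lesssim h\|f\|$, while \eqref{eq:interpol-prf-b}--\eqref{eq:interpol-prf-c} give $\|\pi_h\phi - \phi^e\|_{s_h} \lesssim h\|\phi\|_{H^2} \lesssim h\|e\|_\star$, whence $s_h(u_h,\pi_h\phi) \lesssim h^2\|f\|\|e\|_\star$.

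Collecting the two estimates yields $\|e\|_\star^2 \lesssim h^2\|f\|\|e\|_\star$, and dividing by $\|e\|_\star$ gives the asserted bound. I expect the stabilization term $s_h(u_h,\pi_h\phi)$ to be the main obstacle: it is precisely where the classical $L^2$ duality argument must be adapted to the stabilized cut setting, and recovering the optimal second power of $h$ hinges on the improved control $\|u_h\|_{s_h}\lesssim h\|f\|$ (one order better than the crude energy-norm bound) together with the fact that the closest-point extension $\phi^e$ has vanishing normal derivative on $\Gamma$ and is therefore annihilated by $s_h$.
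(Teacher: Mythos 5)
Your proof is correct and follows essentially the same duality argument as the paper: dual problem with $L^2$ data, elliptic regularity \eqref{eq:ell-reg}, the consistency identity $\mathcal{A}=\mathcal{A}^R$ for the smooth dual solution, Galerkin orthogonality, continuity, interpolation, and the energy estimate of Theorem \ref{thm:tnorm_error}. The only cosmetic difference is that the paper folds the stabilization contribution into the single bound $\mathcal{A}^R_h(e,\varphi-\pi_h\varphi)\lesssim \tn e\tn_h\,\tn \varphi-\pi_h\varphi\tn_h$ (since $\|\cdot\|_{s_h}$ is part of $\tn\cdot\tn_h$), whereas you estimate $s_h(u_h,\pi_h\phi)$ separately via the consistency of $s_h$ on the extension $\phi^e$ --- both yield the same $h^2$ rate.
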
 
\begin{proof}
For $\psi_\Omega \in L^2(\Omega)$ and $\psi_\Gamma$, such that $\| \psi_\Omega \|_{\Omega} + \| \psi_\Gamma \|_\Gamma=1$ let
$\varphi:=(\varphi_1,\varphi_2, \varphi_\Gamma) \in V$ be the weak solution to 
\begin{equation}
\mathcal{A}(v,\varphi) = (\psi_\Omega,v)_\Omega +  (\psi_\Gamma,v_\Gamma)_\Gamma.
\end{equation}
Then by \eqref{eq:ell-reg} we have 
\begin{equation}\label{eq:dual-reg}
\| \varphi_1 \|_{H^2(\Omega_1)} + \| \varphi_2 \|_{H^2(\Omega_2)}  
+ \| \varphi_\Gamma \|_{H^2(\Gamma)} 
\lesssim    
\| \psi_\Omega \|_{\Omega} + \| \psi_\Gamma \|_\Gamma\sim 1.
\end{equation}
Let $e = (u_1-u_{1,h},u_2-u_{2,h},u_\Gamma-u_{\Gamma,h})$ and observe
that using \eqref{eq:robust_consist},
\begin{equation}
(\psi_\Omega,u_1-u_{1,h})_{\Omega_1}
+(\psi_\Omega,u_2-u_{2,h})_{\Omega_2}  +
(\psi_\Gamma,u_\Gamma -u_{h,\Gamma})_\Gamma = \mathcal{A}(e,\varphi) = \mathcal{A}^R(e,\varphi).
\end{equation}
Applying now the Galerkin orthogonality \eqref{eq:galortho} we see that
\begin{equation}
\mathcal{A}^R(e,\varphi) = \mathcal{A}^R(e,\varphi-\pi_h \varphi) -
s_h(e, \pi_h \varphi) = \mathcal{A}^R_h(e,\varphi-\pi_h \varphi).
\end{equation}
By the continuity \eqref{eq:cont} we can bound the right hand side,
\begin{equation}
\mathcal{A}^R_h(e,\varphi-\pi_h \varphi) \lesssim \tn e \tn_h \tn
\varphi-\pi_h \varphi \tn_h.
\end{equation}
Then applying the approximation \eqref{eq:interpol} and the regularity
\eqref{eq:dual-reg} we have
\begin{equation}
(\psi_\Omega,u_1-u_{1,h})_{\Omega_1}
+(\psi_\Omega,u_2-u_{2,h})_{\Omega_2}  +
(\psi_\Gamma,u_\Gamma -u_{h,\Gamma})_\Gamma \lesssim h \tn e \tn_h.
\end{equation}
We conclude by applying Theorem \ref{thm:tnorm_error} in the right hand side and
taking the supremum over the functions $(\psi_\Omega,\psi_\Gamma)$ in
$L^2$.
\end{proof}

\section{Numerical Examples} 

In this Section we illustrate the properties of the model and method by presenting 
some numerical  results. In all examples we used $\beta=10$ as a stabilization parameter.

\subsection{Convergence and Robustness with Respect to Conditioning}\label{ex_1}

We consider a simple example with known exact solution: the domain $(0,1)\times (0,1)$ is cut 
in half along a vertical line at $x=1/2$.
We take 
$A_1 = A_2 = A_{\Gamma} = I$
and choose a problem with exact solution $u=x(1-x)y(1-y)$. This solution corresponds (without coupling) to 
the source terms $$f_i=2x(1-x)+2y(1-y)$$ Since the normal derivative of the exact solution is zero at $x=1/2$, it does not contribute to the source term on the interface.
We choose $f_{\Gamma} = 1/2$ corresponding to $u_{\Gamma} =y(1-y)/4$, and thus $u_{\Gamma}=u$ at $x=1/2$. We apply zero Dirichlet boundary conditions on $u$
and on $u_{\Gamma}$ (imposed on the boundary of the band of elements intersected by $(1/2,y)$). This is now the solution of (\ref{eq:lapbelt1})--(\ref{eq:lapbelt2}) independent of $B$. A sample discrete solution is shown in Fig. \ref{fig:elevation} with $u_{\Gamma}$ shown as a red line.
We did not impose gradient jumps on the band (second term in $s_{h,\Gamma}$), normal stabilization proved sufficient in this case.

In Figs. \ref{fig:conv1}--\ref{fig:conv4} we show convergence for different choices of parameters in different norms. The method is completely robust with
optimal convergence for all choices. In Fig. \ref{fig:condest} we show the variation of the condition number ((left) with respect to mesh refinement and choice of $\alpha$. The condition number is $O(h^{-2})$ as expected and does not grow with $\alpha$. We also show (right) the effect of using the non--robust method (\ref{eq:FEM}) which shows a linear dependece on $\alpha$ on a fixed mesh, while no such effect is present in the robust method. This robustness
is important since $\alpha$ physically depends on the crack width \cite{MaJaRo05} which is expected to be small.

\subsection{Effect of Gradient Jump Stabilization}\label{ex_2}

This example is taken from \cite{MaJaRo05} with domain is $(0,2)\times(0,1)$ with Dirichlet data $u=1$ at $x=2$ and $u=0$ at $x=0$. Homogeneous Neumann data were applied at $y=0$ and $y=1$. Data were $f_i=f_\Gamma=0$, $A_1 = A_2= I$ and $A_{\Gamma} = a_{\Gamma}d\, I$
with $a_{\Gamma}=2\times 10^{-3}$ for $1/4 < y< 3/4$, $a_{\Gamma}=1$ elsewhere, and with $d=0.01$ (the thickness of the crack). Following \cite{MaJaRo05} we then set $\alpha= 2a_{\Gamma}/d$. 

To show the effect of stabilization, we chose to scale $s_{h,i}$ and $s_{h,\Gamma}$ by a parameter $\gamma$. We retained $\beta = 10$ and normal stabilization on the band. In Figs. \ref{fig:elevationzero}--\ref{fig:elevationone} we show the effect of the parameter $\gamma$. When $\gamma = 0$ the jump in diffusion on the interface leads to slight instabilities at $y=1/4$ and $y=3/4$ which are visible to the eye. These are less pronounced for $\gamma =10^{-2}$ and not significant for $\gamma=1$. The overall solution agrees with that of \cite{MaJaRo05}.

\subsection{Physical Effect of Crack Width}\label{ex_3}

Finally, we show the effect of the crack width with respect to the solution. We used a domain $(0,1)\times(0,1)$ with a quarter circle crack, shown on the computational mesh in Fig. \ref{fig:mesh}. The data were $A_1 = 5\, I$ (inside the circle) $A_2= I$ (outside the circle) and $a_{\Gamma}=0.1$ with definitions as in Example \ref{ex_2}. Dirichlet data $u=1$ at $x=1$ and $u=0$ at $x=0$ were used (also on the band) and homogeneous Neumann data on the remaining boundaries. In Figs. \ref{fig:dminus2}--\ref{fig:dminus4} we see the effect of decreasing the interface width by one order of magnitude between figures. The solution rapidly tends to a continuous state.

\bigskip

\paragraph{Acknowledgement.} This research was supported in part by
EPSRC, UK, Grant No. EP/P01576X/1, the Swedish Foundation for Strategic Research Grant No.\ AM13-0029, the Swedish Research Council Grants No.
2013-4708,  2017-03911, 2018-05262, and Swedish strategic research programme eSSENCE.

\bibliographystyle{abbrv}

%\vfill
%\bigskip
%\bigskip
%\noindent
%\footnotesize {\bf Authors' addresses:}
%
%\smallskip
%\noindent
%Erik Burman,  \quad \hfill \addressuclshort\\
%{\tt e.burman@ucl.ac.uk}
%
%\smallskip-eps-converted-to.pdf
%\noindent
%Peter Hansbo,  \quad \hfill \addressjushort\\
%{\tt peter.hansbo@ju.se}
%
%\smallskip
%\noindent
%Mats G. Larson,  \quad \hfill \addressumushort\\
%{\tt mats.larson@umu.se}

\begin{figure}[ht]
	\begin{center}
		\includegraphics[scale=0.25]{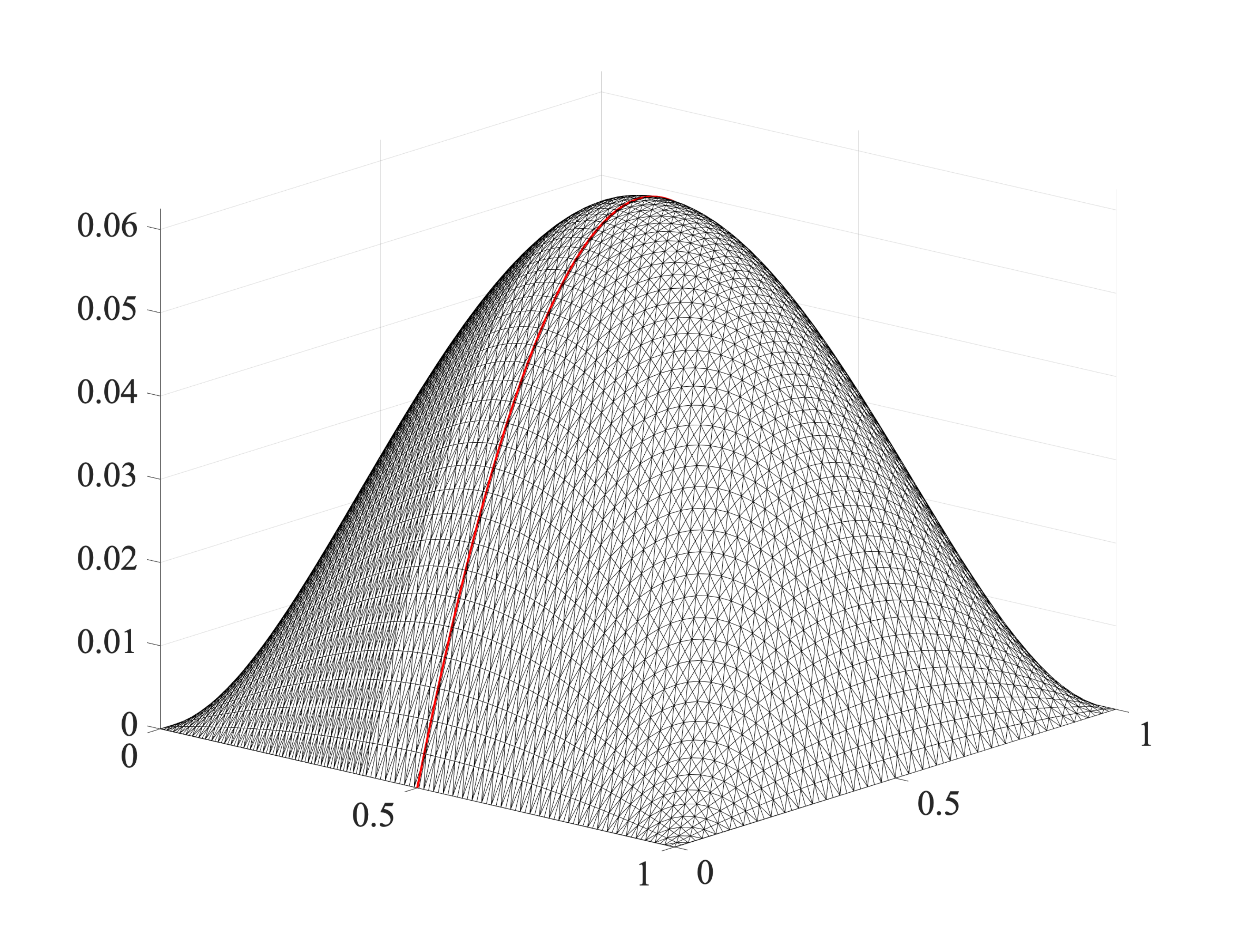}
	\end{center}
	\caption{Elevation of the computed solution on a particular mesh (for $\alpha =1$, $\xi=1$).}
	\label{fig:elevation}
\end{figure}

\begin{figure}[ht]
	\begin{center}
		\includegraphics[scale=0.15]{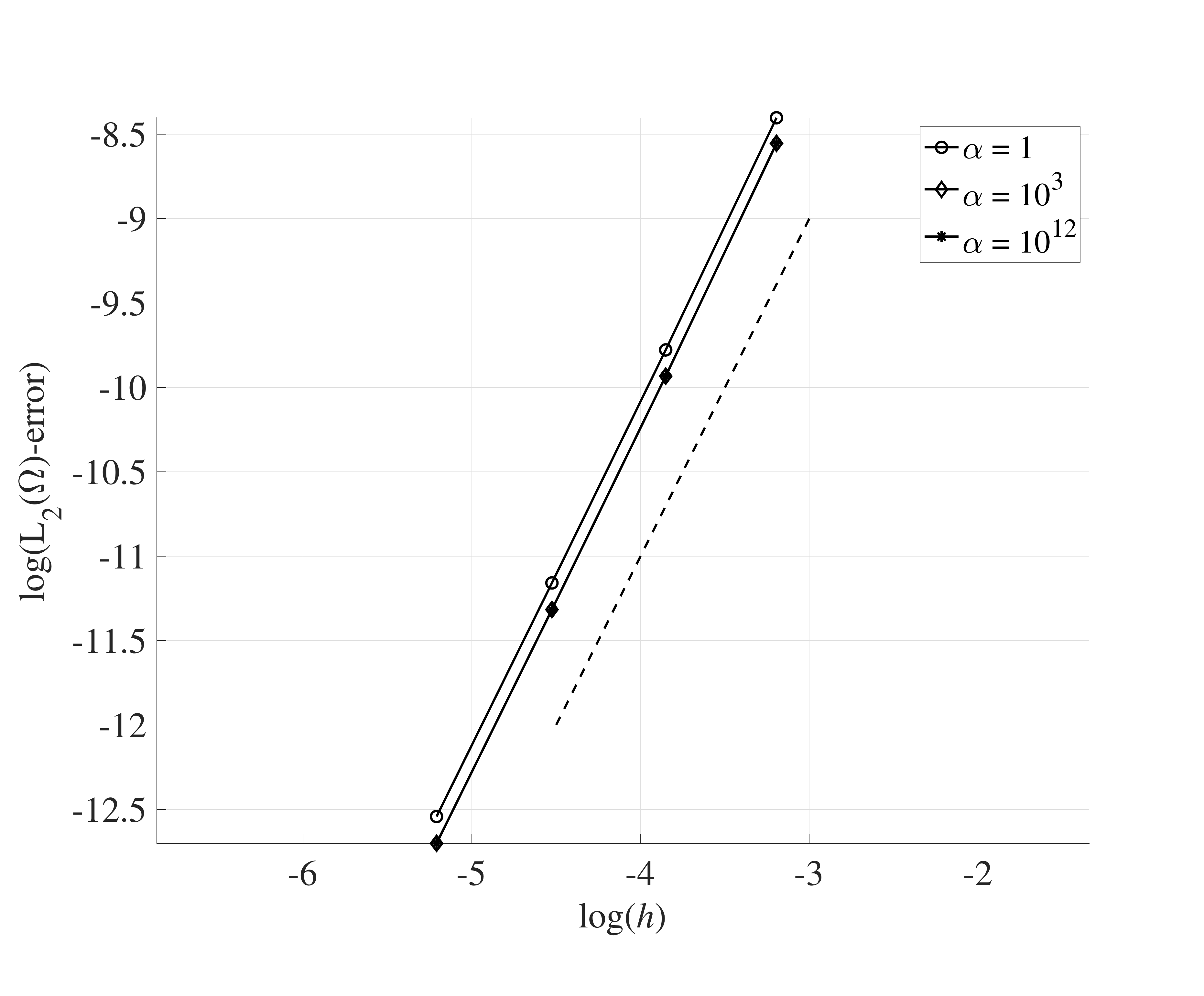}\includegraphics[scale=0.14]{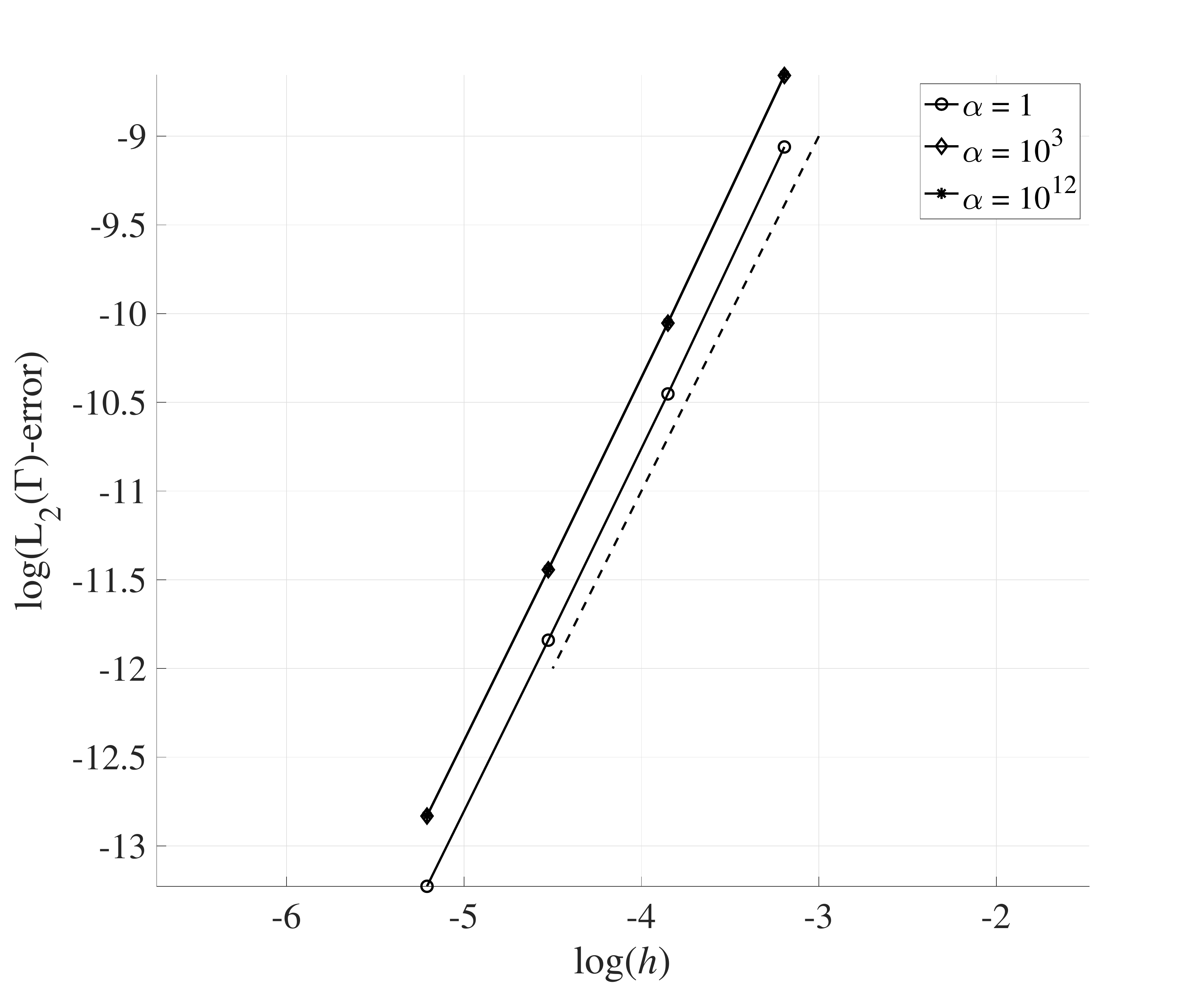}
	\end{center}
	\caption{Convergence in $L_2(\Omega)$ and in $L_2(\Gamma)$ for varying $\alpha$ with $\xi=1$. Dashed line has inclination 1:2.}
	\label{fig:conv1}
\end{figure}

\begin{figure}[ht]
	\begin{center}
		\includegraphics[scale=0.15]{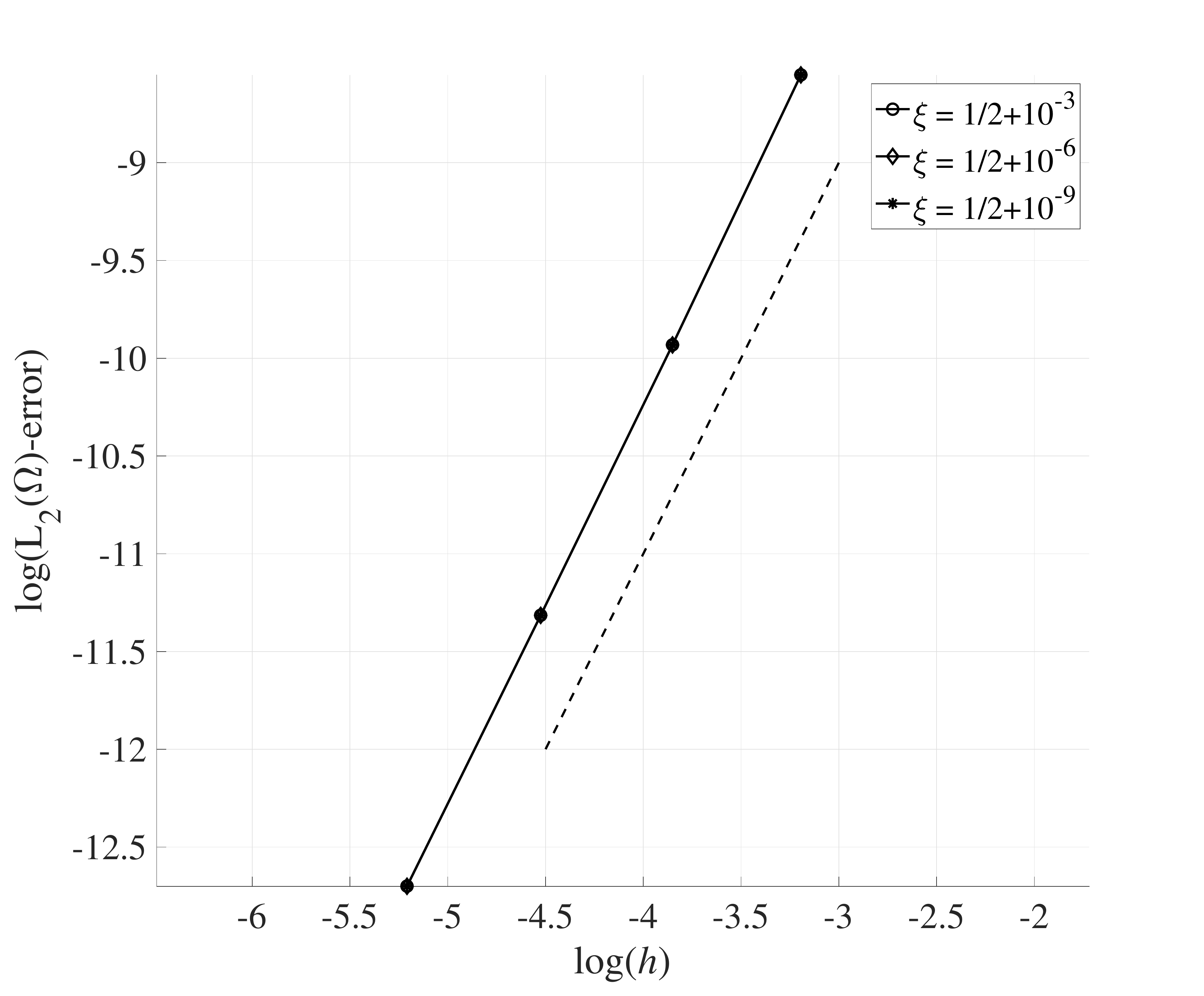}\includegraphics[scale=0.15]{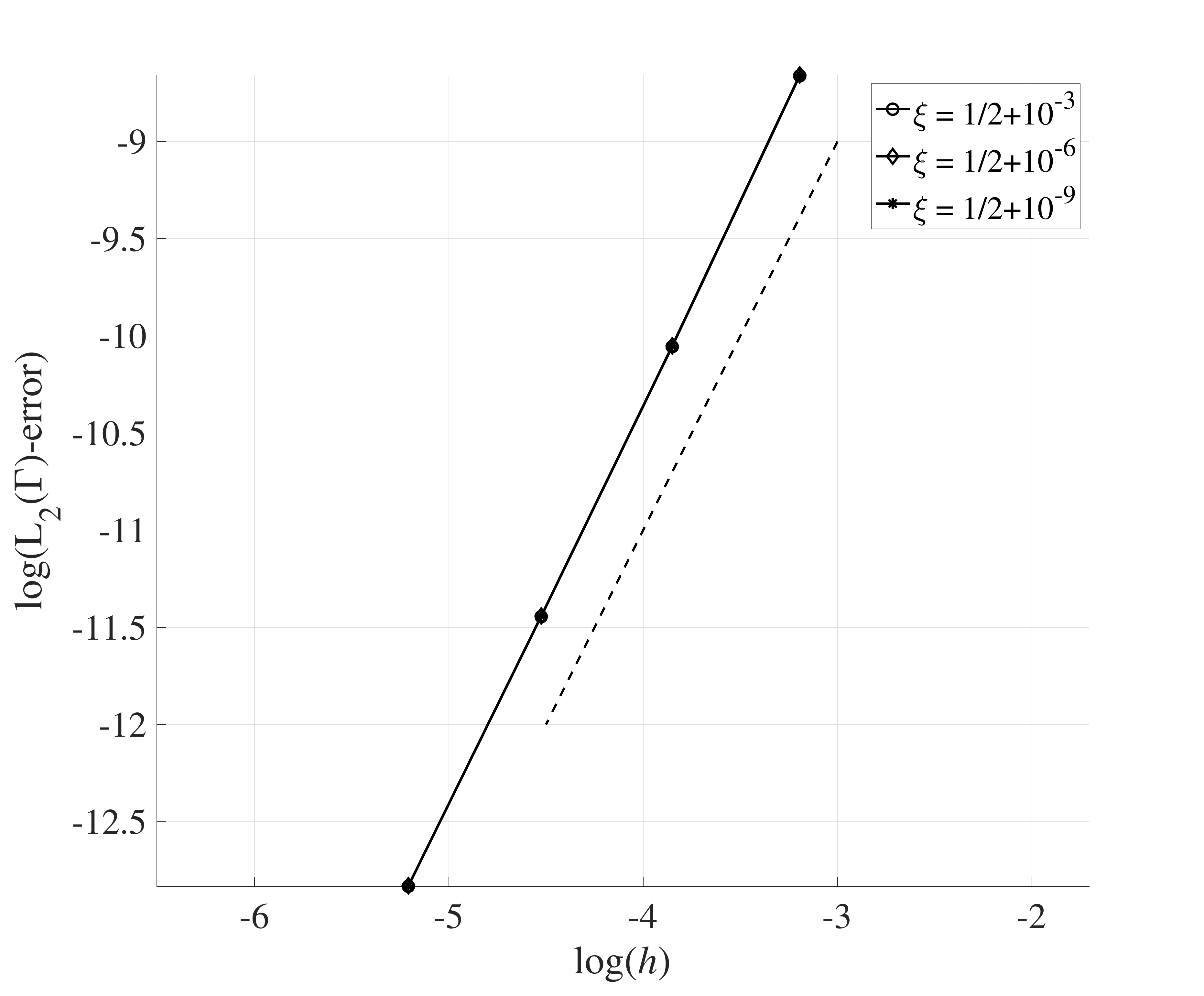}
	\end{center}
	\caption{Convergence in $L_2(\Omega)$ and in $L_2(\Gamma)$ for varying $\xi$ with $\alpha=1$. Dashed line has inclination 1:2.}
	\label{fig:conv2}
\end{figure}

\begin{figure}[ht]
	\begin{center}
		\includegraphics[scale=0.15]{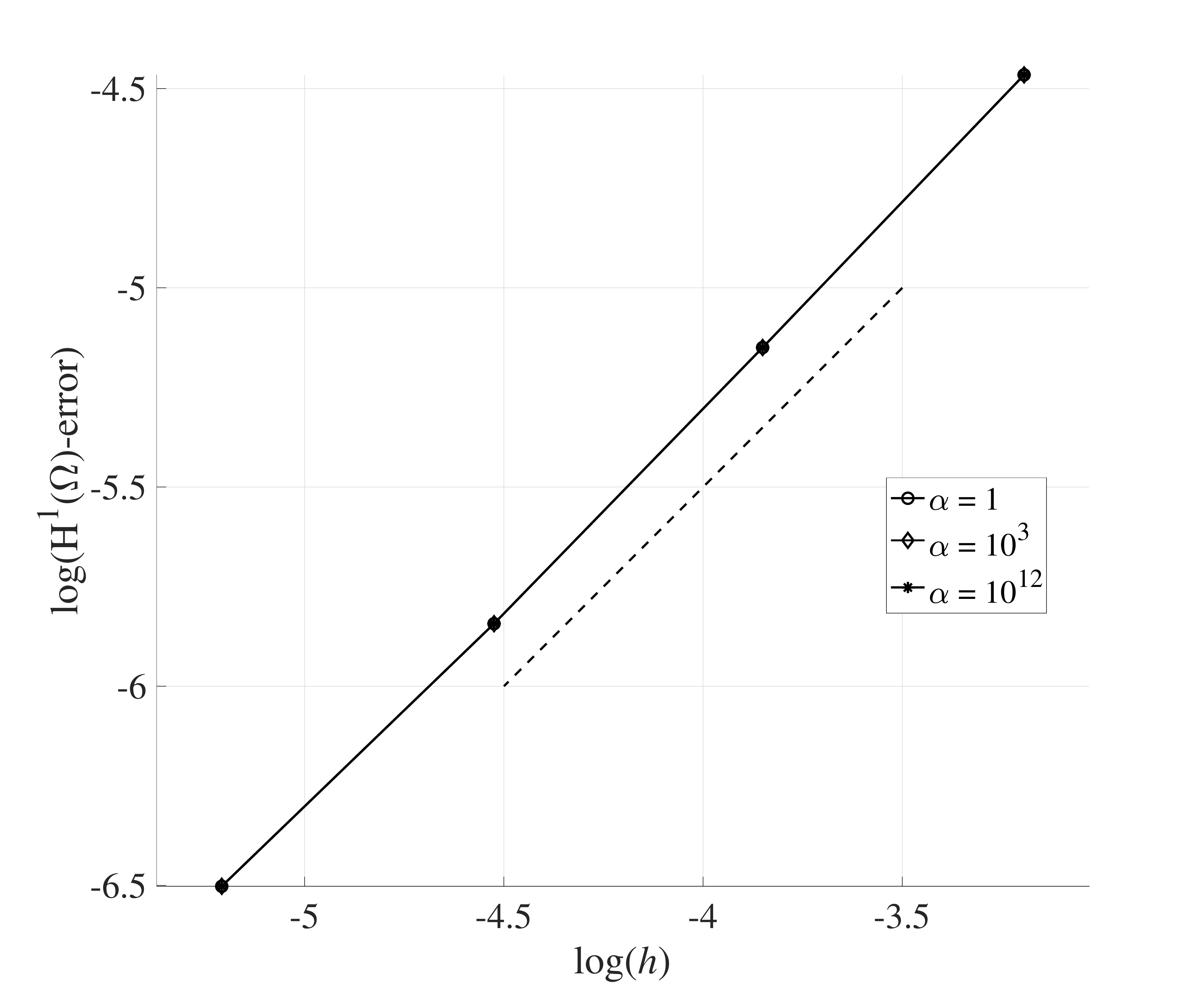}\includegraphics[scale=0.15]{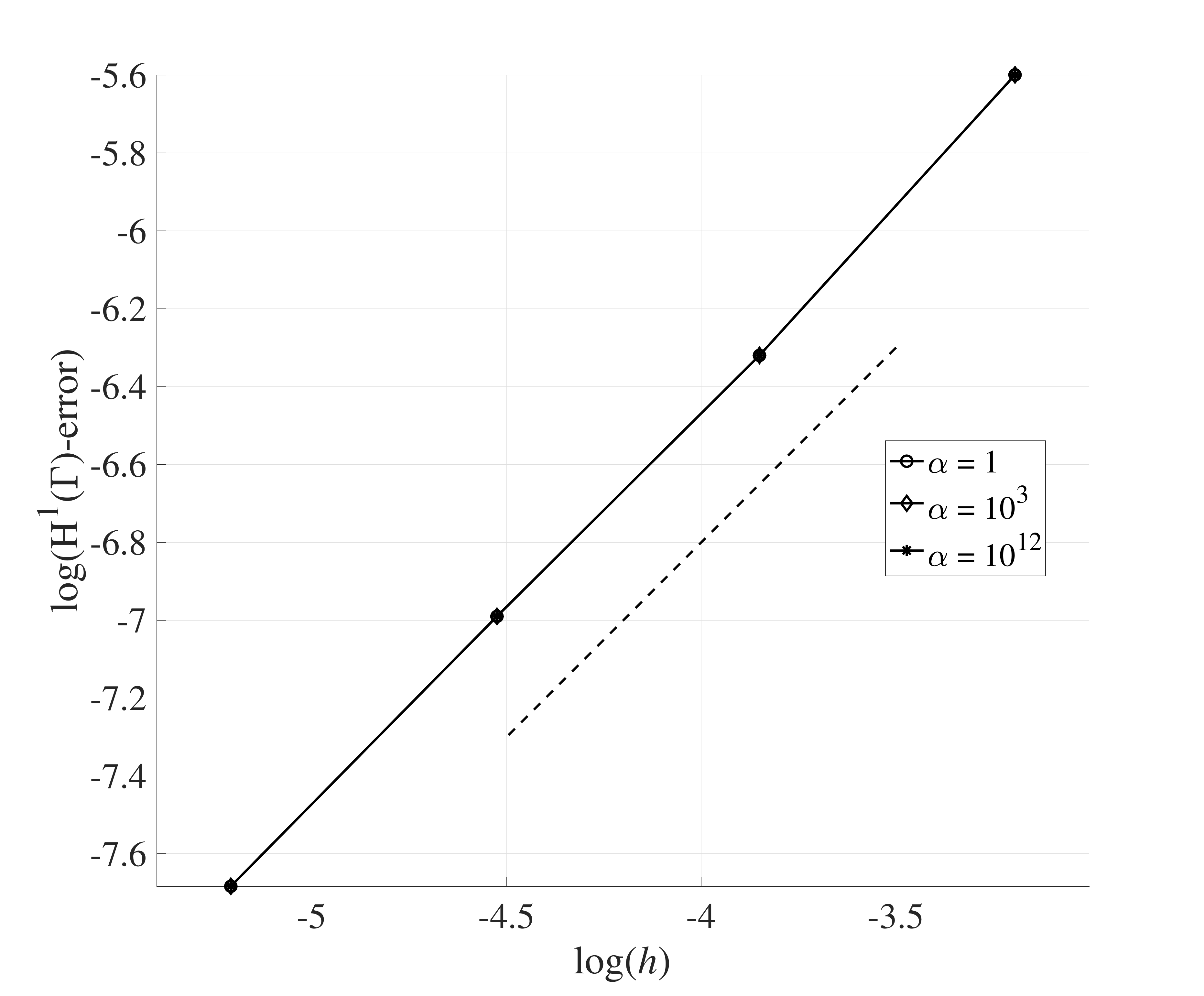}
	\end{center}
	\caption{Convergence in $H^1(\Omega)$ and in $H^1(\Gamma)$ for varying $\alpha$ with $\xi=1$. Dashed line has inclination 1:1.}
	\label{fig:conv3}
\end{figure}

\begin{figure}[ht]
	\begin{center}
		\includegraphics[scale=0.16]{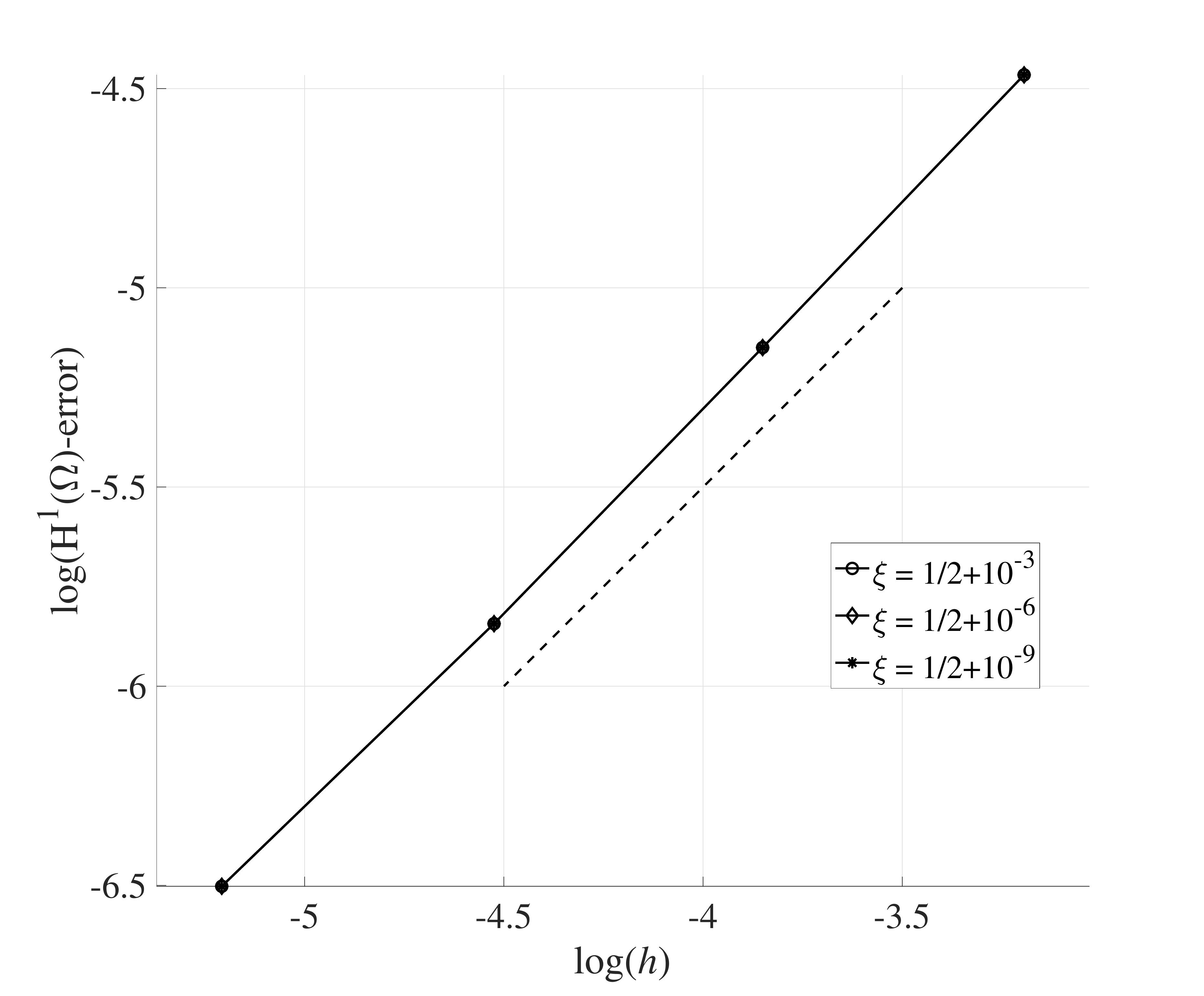}\includegraphics[scale=0.16]{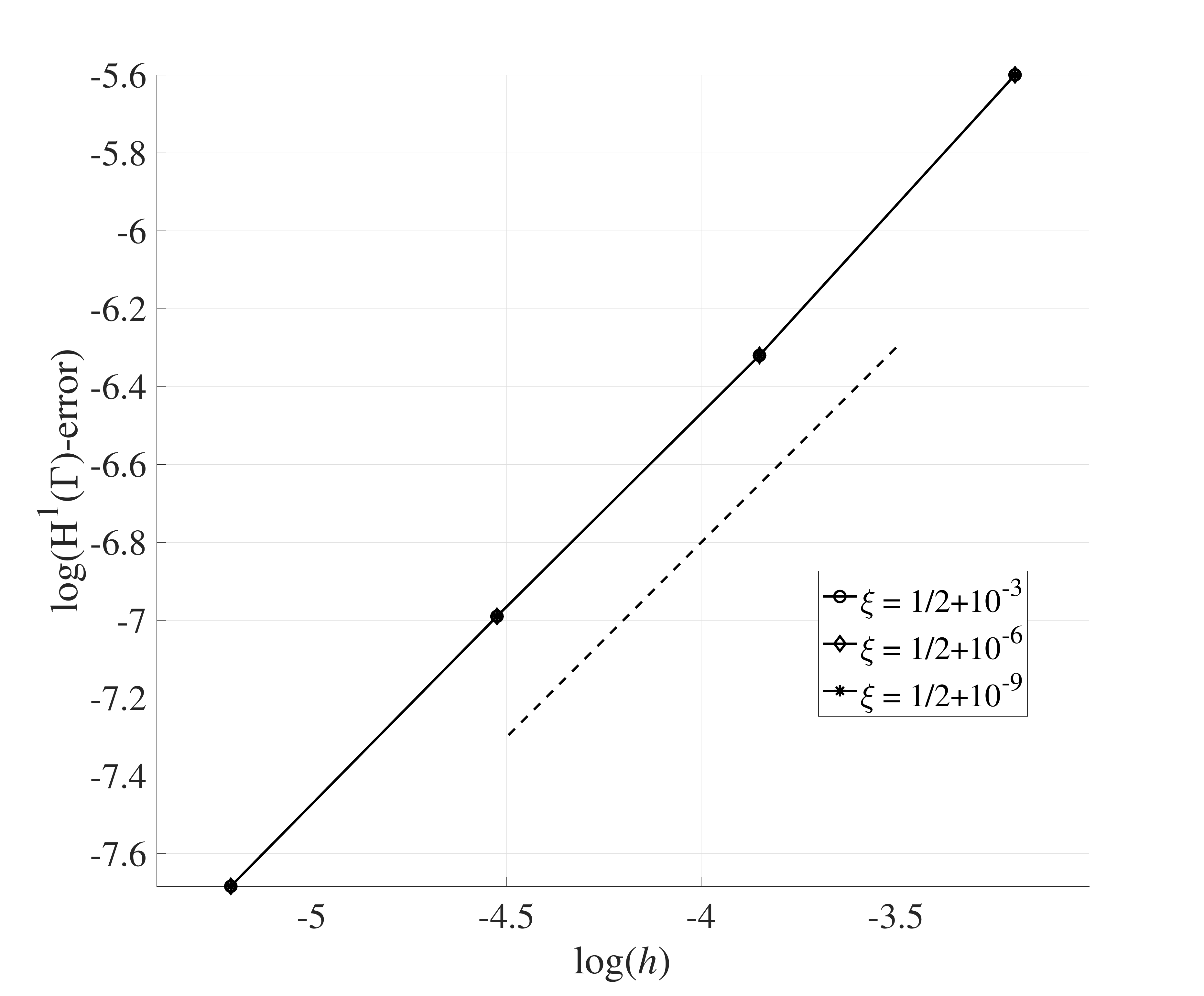}
	\end{center}
	\caption{Convergence in $H^1(\Omega)$ and in $H^1(\Gamma)$ for varying $\xi$ with $\alpha=1$. Dashed line has inclination 1:1.}
	\label{fig:conv4}
\end{figure}

\begin{figure}[ht]
	\begin{center}
		\includegraphics[scale=0.16]{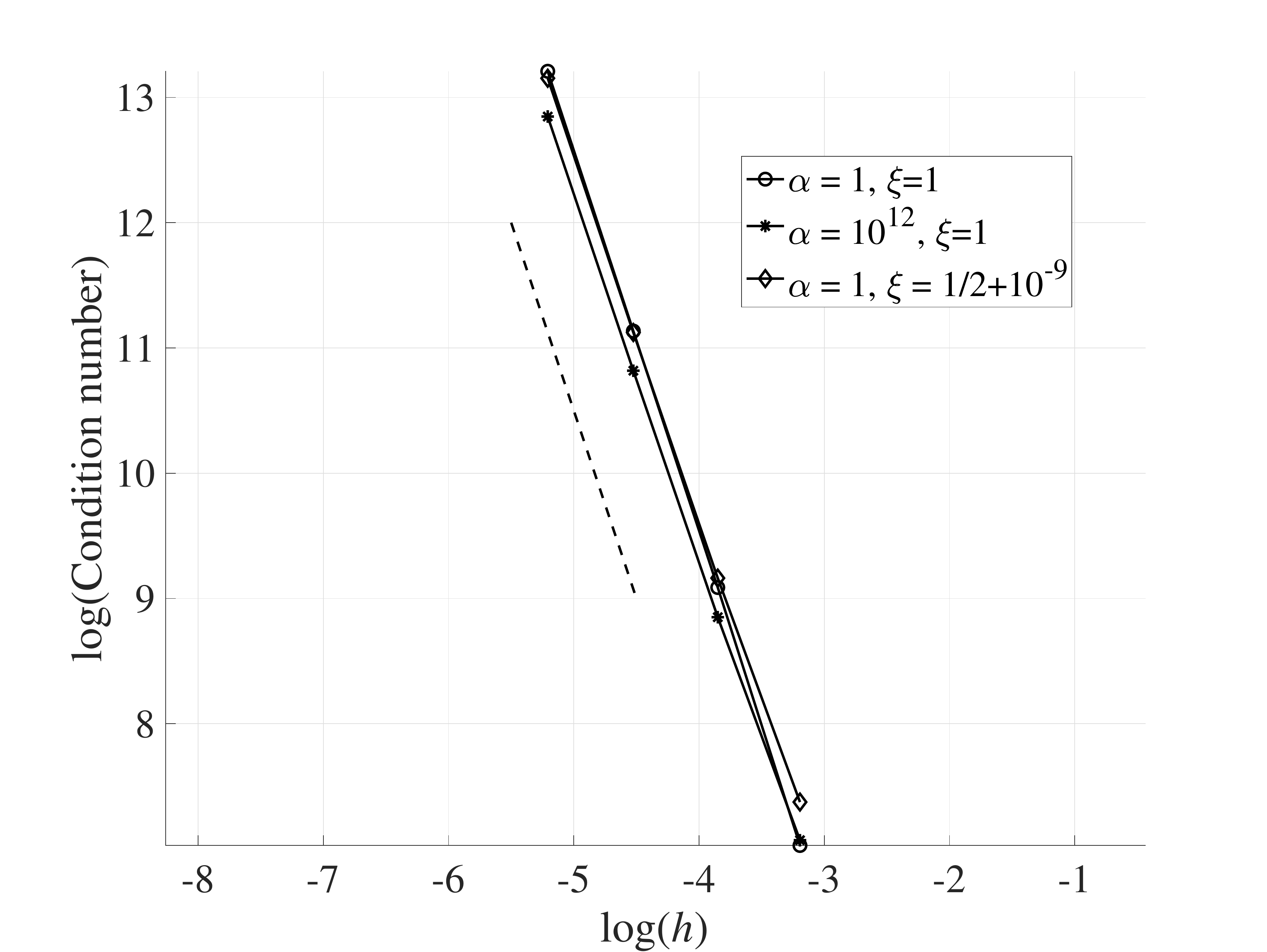}\includegraphics[scale=0.16]{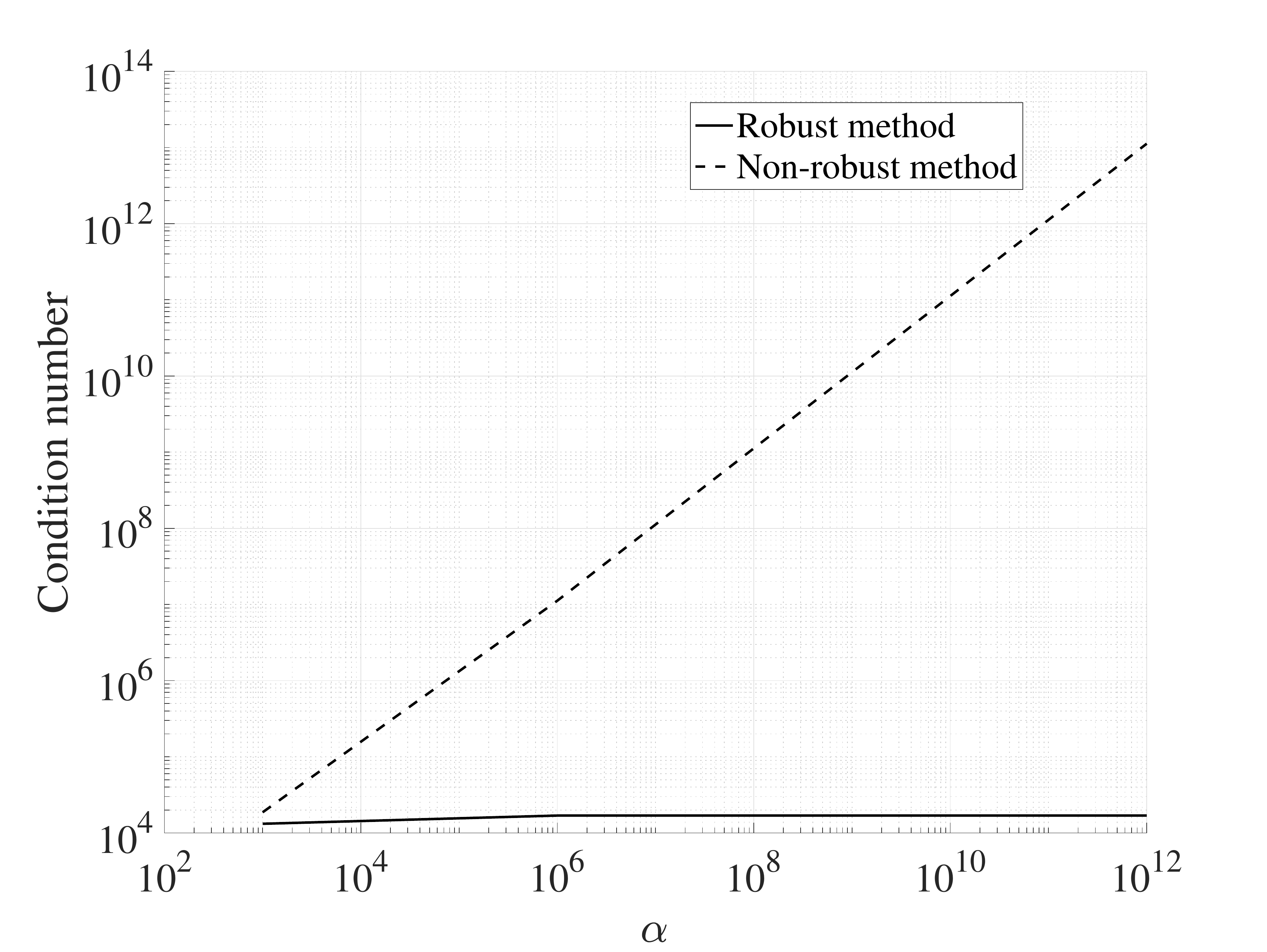}
	\end{center}
	\caption{Left: Condition number as a function of meshsize. Dashed line has inclination 1:2. Right: condition numbers on a fixed mesh with varying $\alpha$ using the robust method (\ref{eq:robustFEM}) and the non-robust method (\ref{eq:FEM}).}
	\label{fig:condest}
\end{figure}

\begin{figure}[ht]
	\begin{center}
		\includegraphics[scale=0.16]{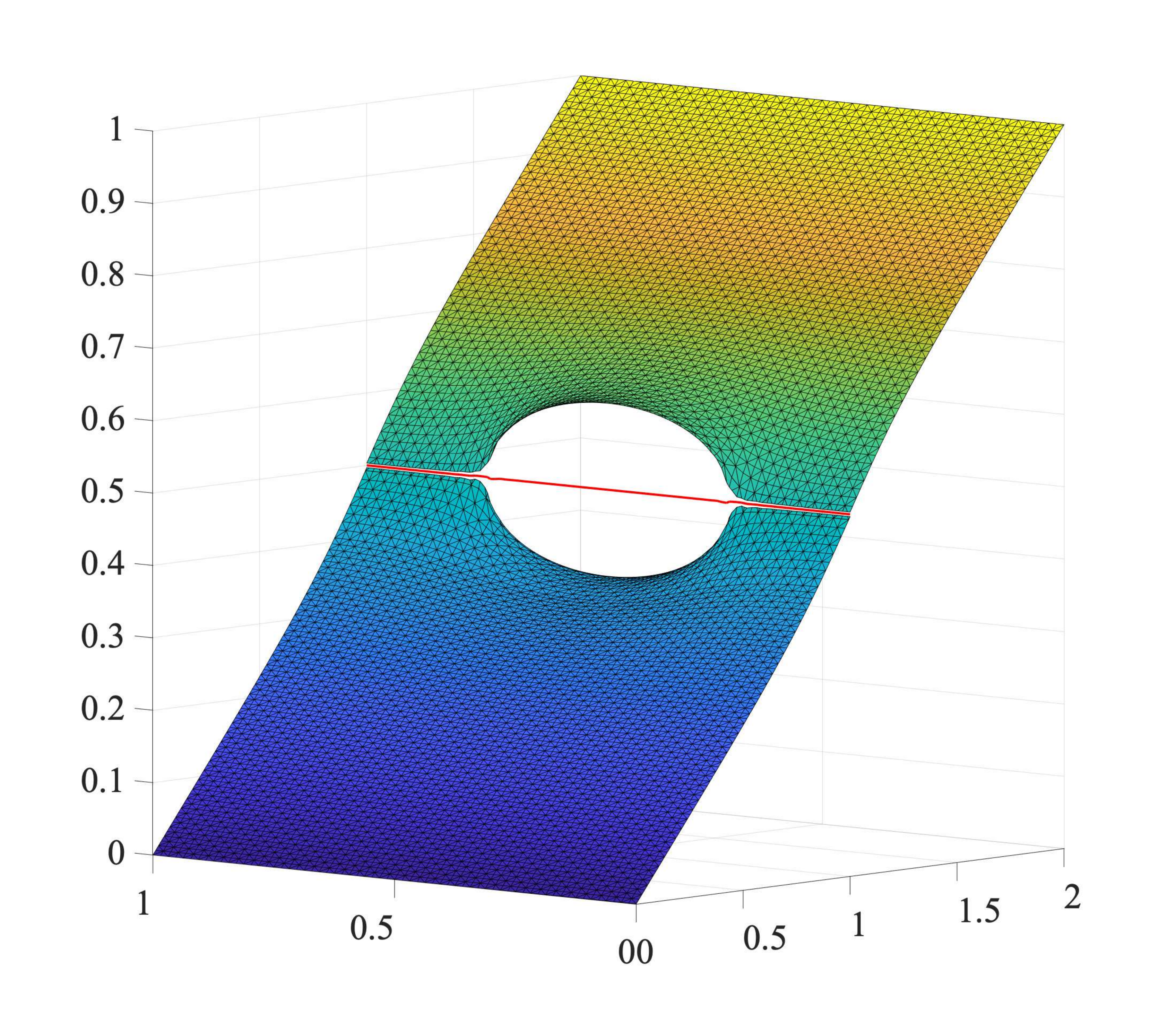}\includegraphics[scale=0.16]{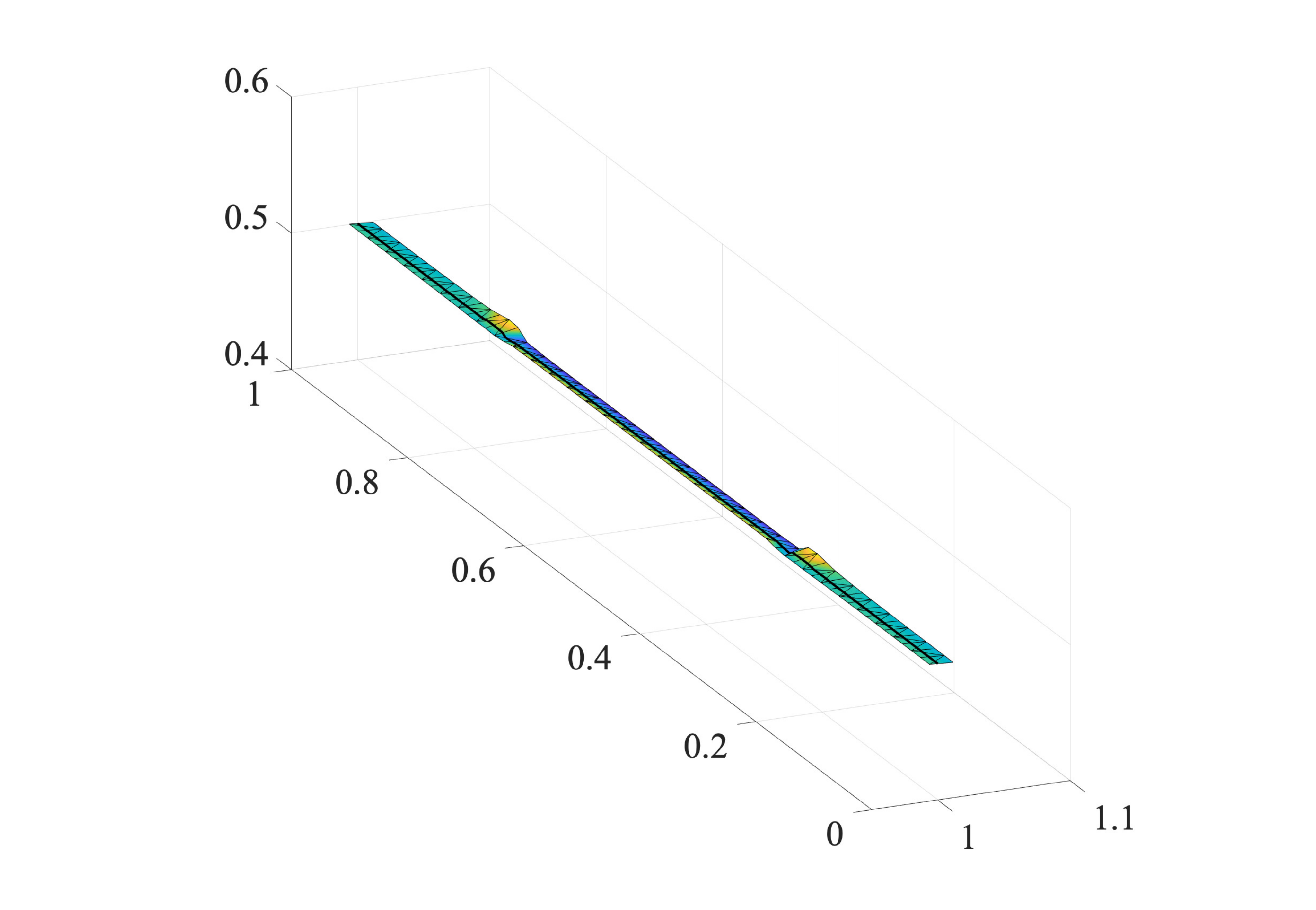}
	\end{center}
	\caption{Elevation of the solution on $\Omega$ and the band containing $\Gamma$ for $\gamma = 0$.}
	\label{fig:elevationzero}
\end{figure}

\begin{figure}[ht]
	\begin{center}
		\includegraphics[scale=0.16]{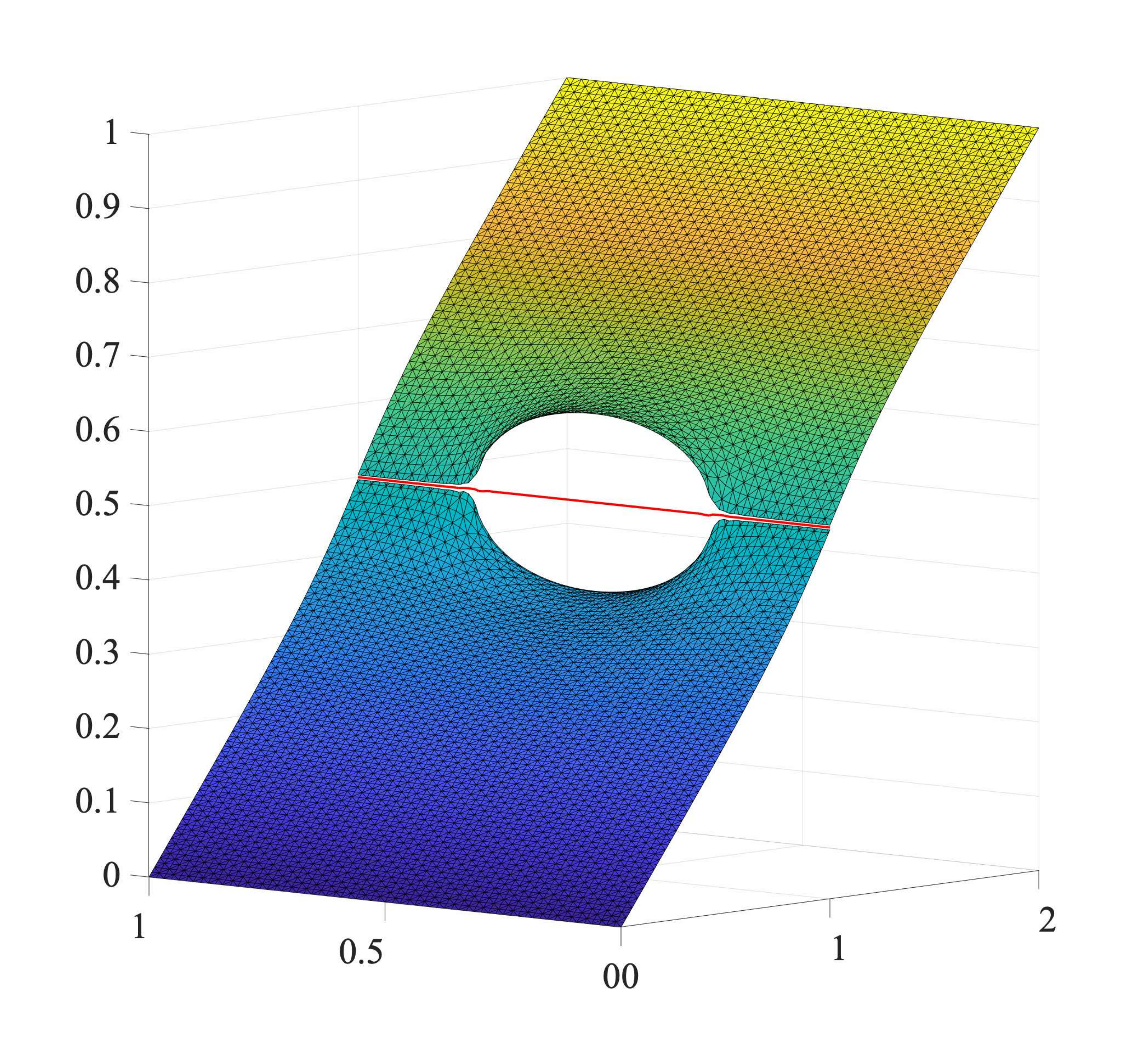}\includegraphics[scale=0.16]{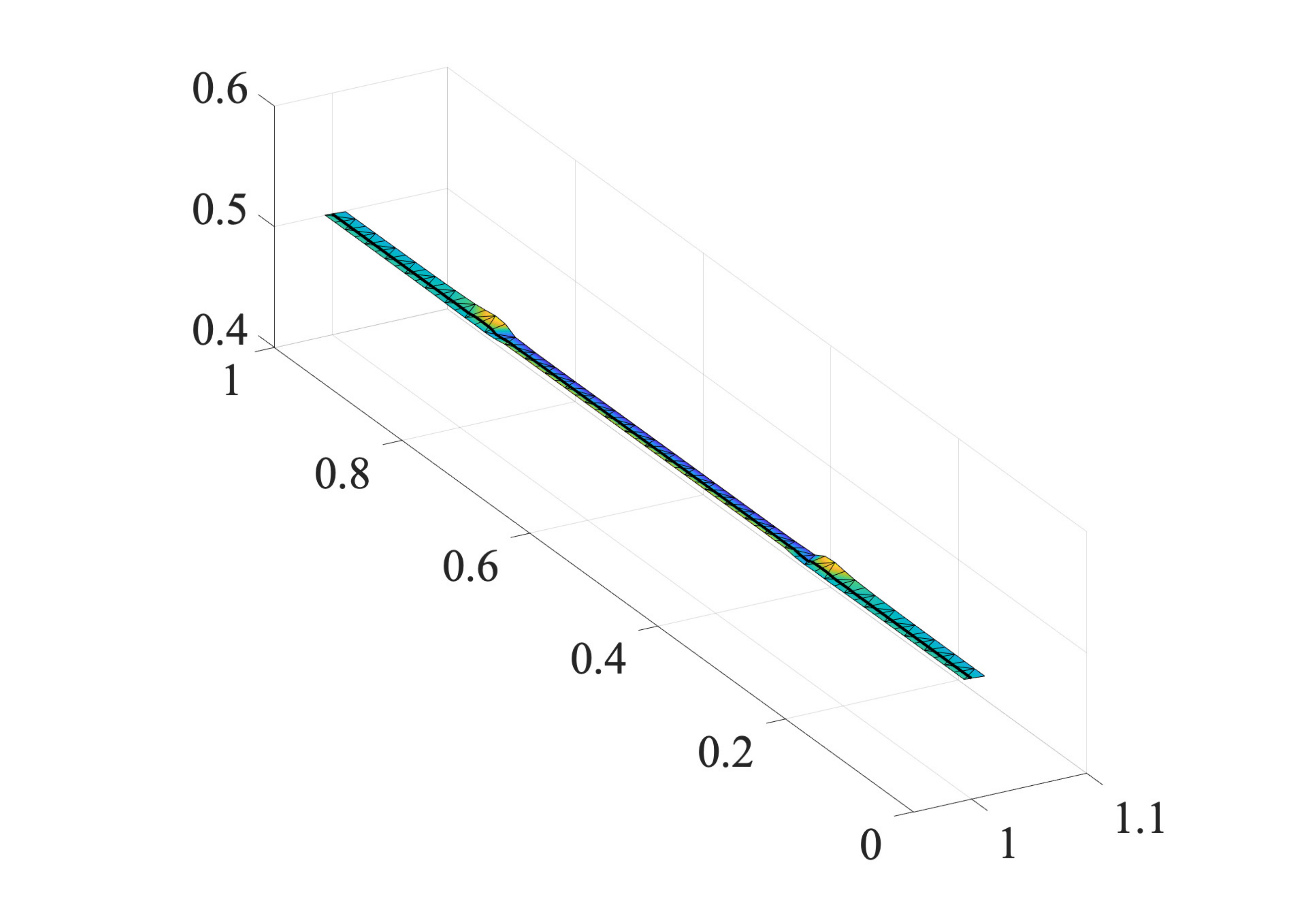}
	\end{center}
	\caption{Elevation of the solution on $\Omega$ and the band containing $\Gamma$ for $\gamma = 10^{-2}$.}
	\label{fig:elevationminus2}
\end{figure}

\begin{figure}[ht]
	\begin{center}
		\includegraphics[scale=0.16]{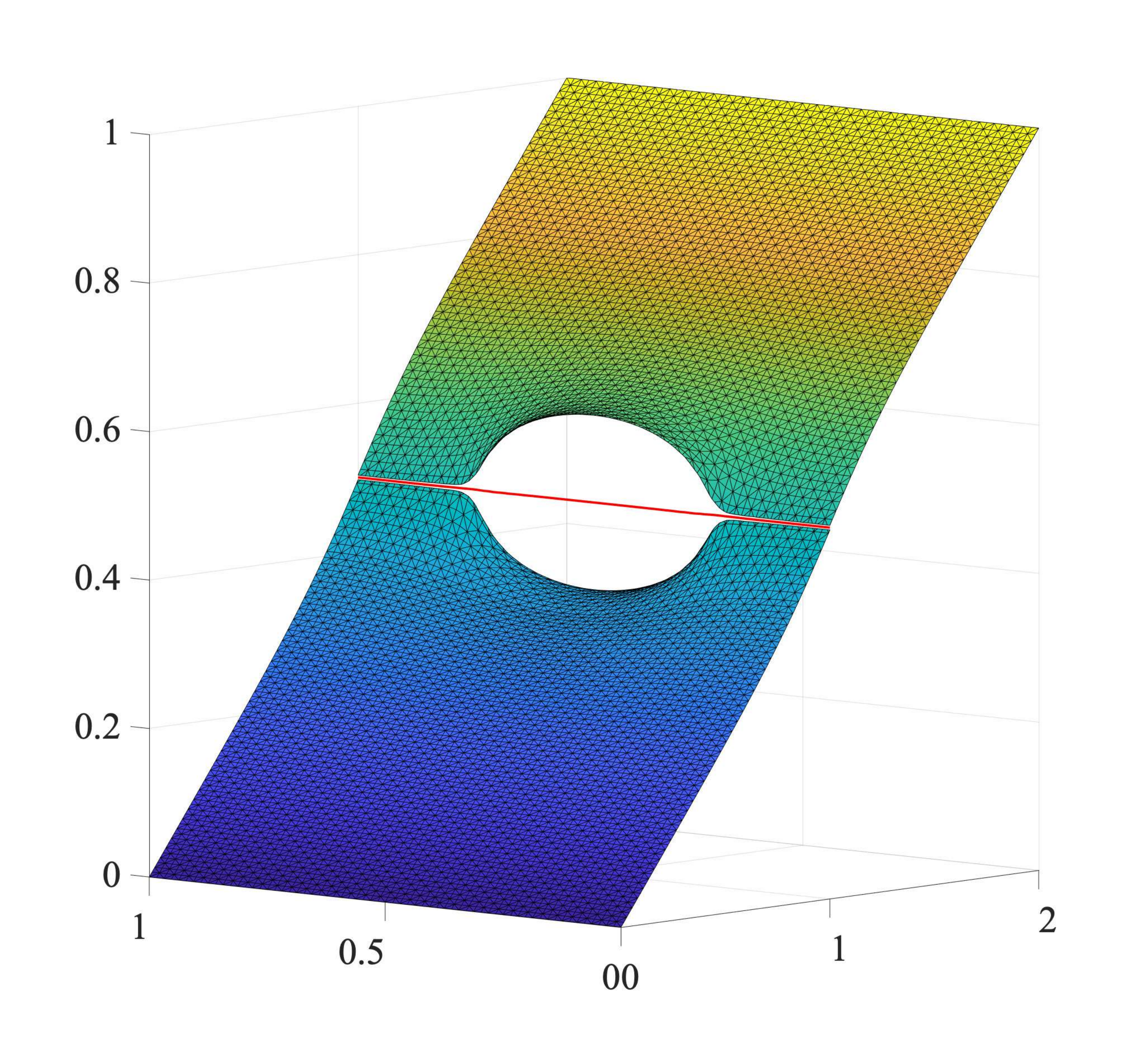}\includegraphics[scale=0.16]{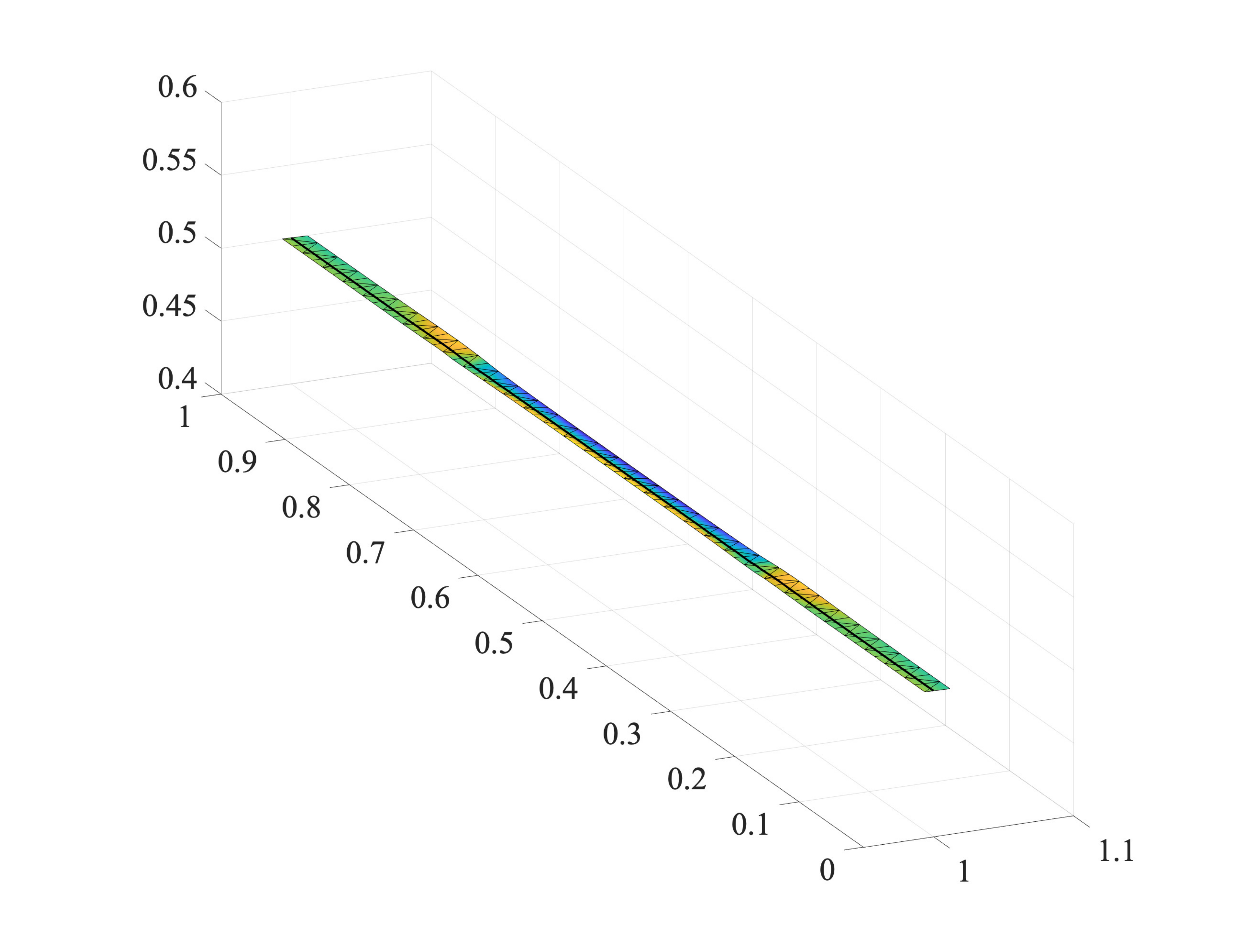}
	\end{center}
	\caption{Elevation of the solution on $\Omega$ and the band containing $\Gamma$ for $\gamma = 1$.}
	\label{fig:elevationone}
\end{figure}

\begin{figure}[ht]
	\begin{center}
		\includegraphics[scale=0.20]{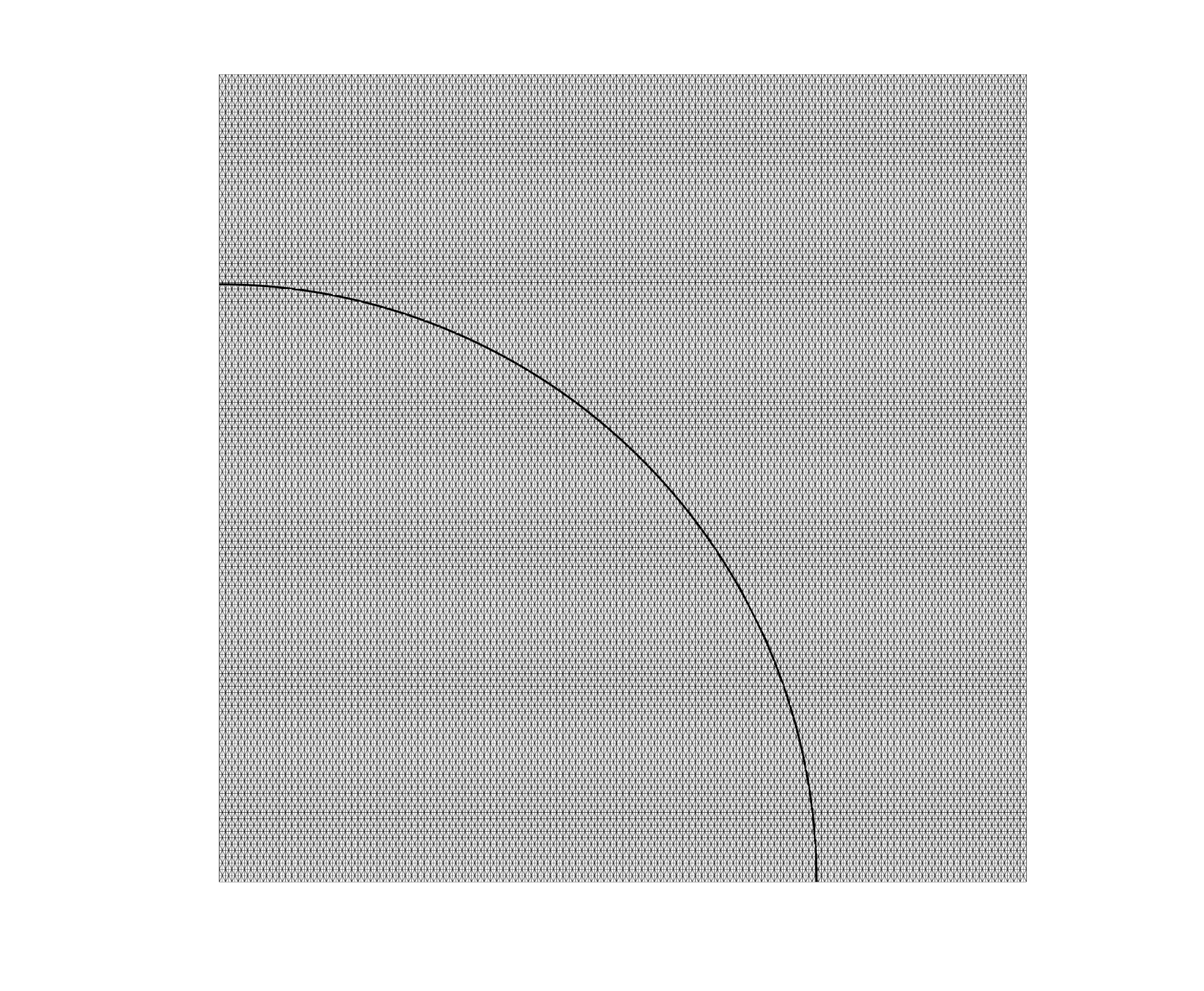}
	\end{center}
	\caption{Computational mesh with interface indicated.}
	\label{fig:mesh}
\end{figure}

\begin{figure}[ht]
	\begin{center}
		\includegraphics[scale=0.18]{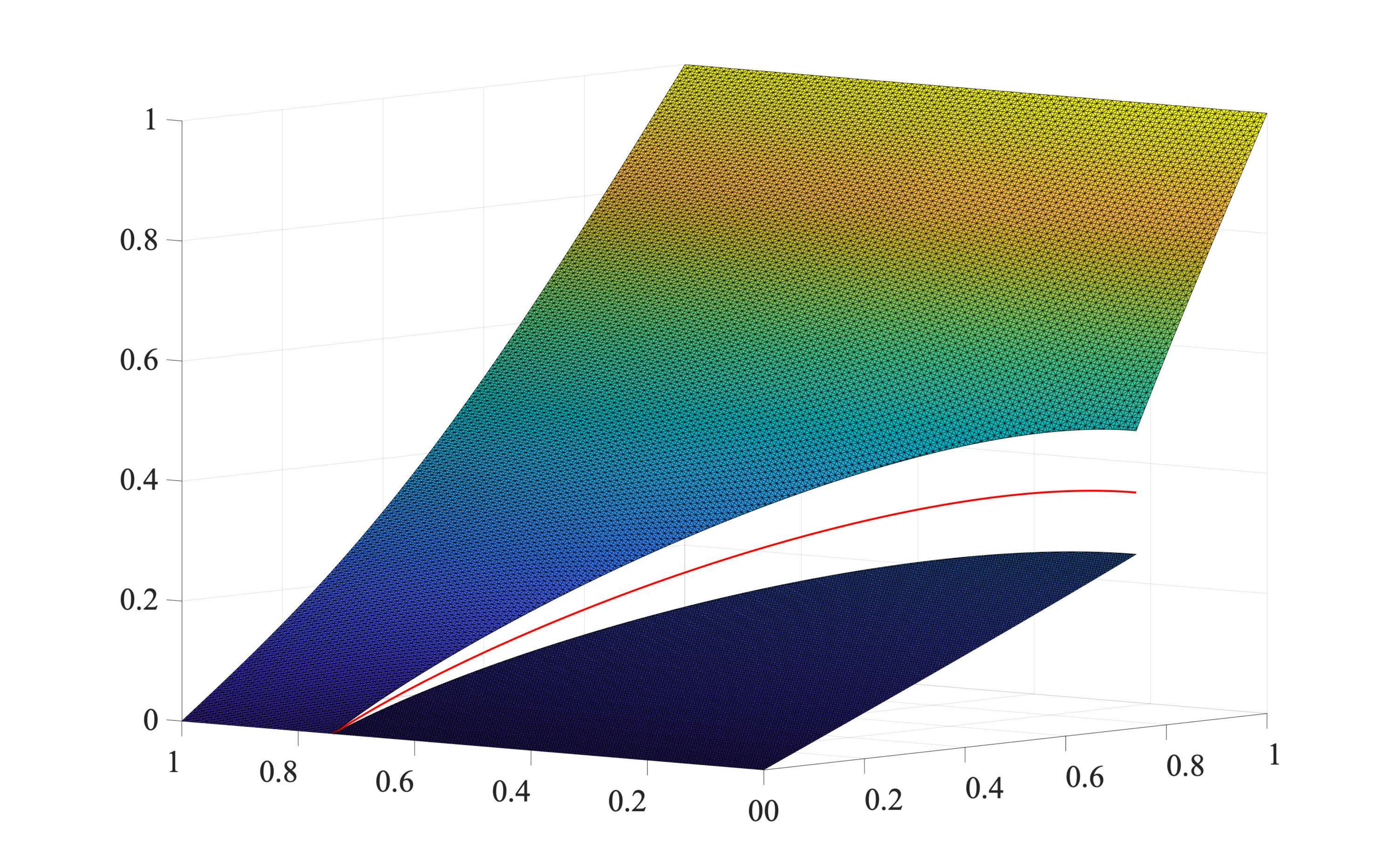}
	\end{center}
	\caption{Elevation for $d=10^{-2}$.}
	\label{fig:dminus2}
\end{figure}

\begin{figure}[ht]
	\begin{center}
		\includegraphics[scale=0.16]{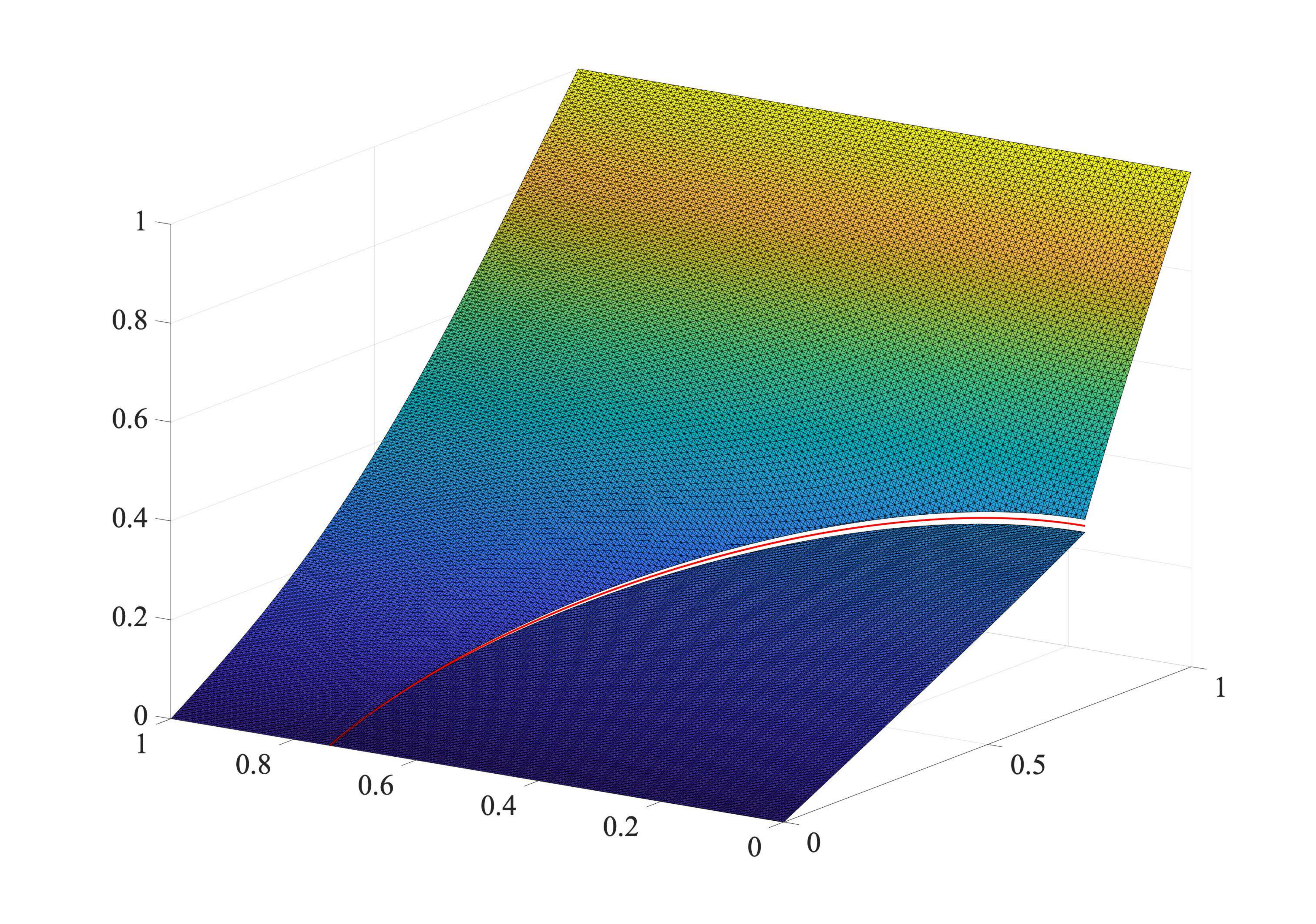}
	\end{center}
	\caption{Elevation for $d=10^{-3}$.}
	\label{fig:dminus3}
\end{figure}

\begin{figure}[ht]
	\begin{center}
		\includegraphics[scale=0.16]{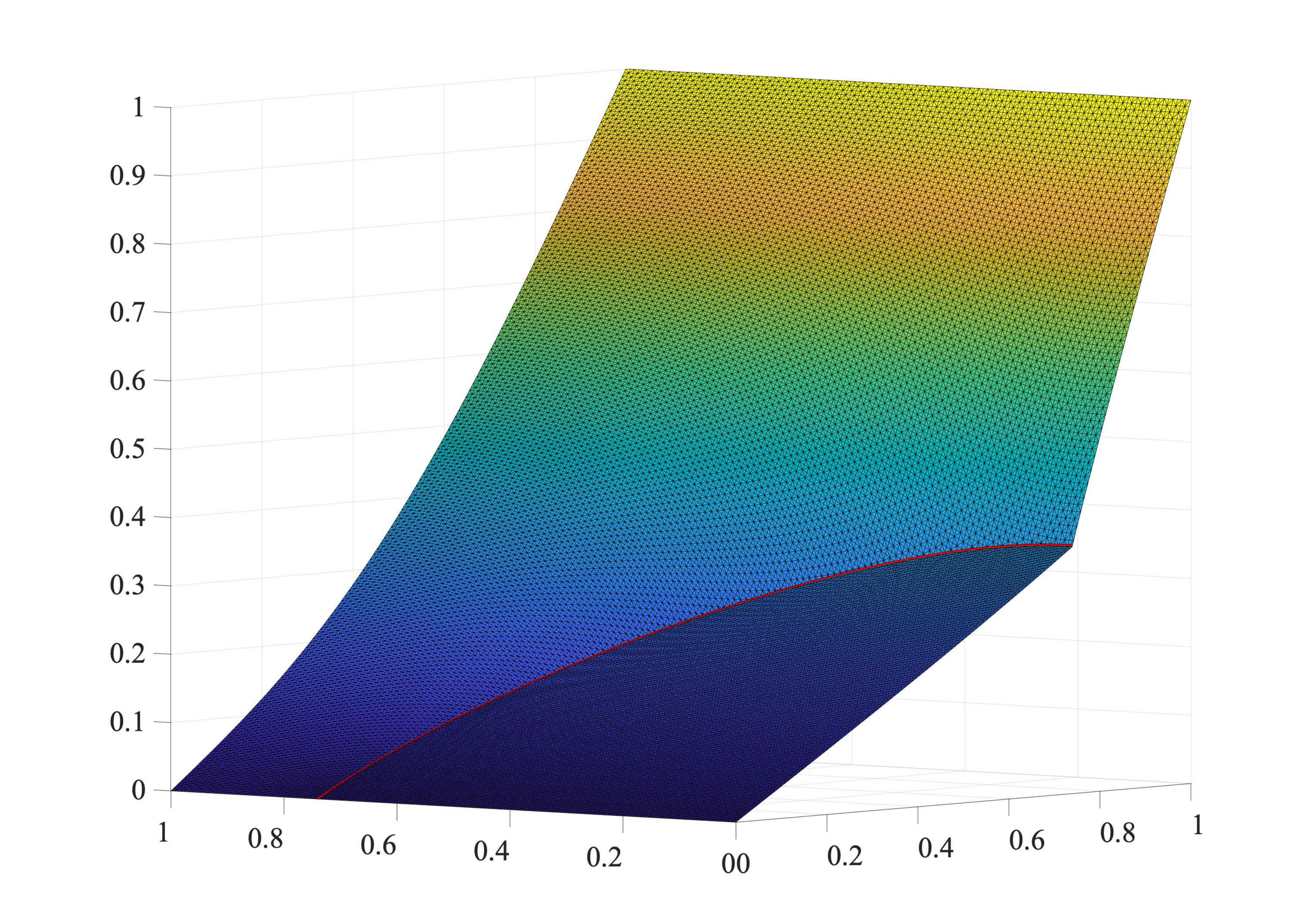}
	\end{center}
	\caption{Elevation for $d=10^{-4}$.}
	\label{fig:dminus4}
\end{figure}

\end{document}